\theoremstyle{plain}
\newtheorem{theorem}{Theorem}[section]
\newtheorem{definition}[theorem]{Definition}
\newtheorem{lemma}[theorem]{Lemma}
\newtheorem{corollary}[theorem]{Corollary}
\newtheorem{proposition}[theorem]{Proposition}
\theoremstyle{remark}
\newtheorem{remark}[theorem]{Remark}
\def\C{{\mathbf C}}
\def\R{{\mathbf R}}
\def\F{\mathcal F}
\def\virgp{\raise 2pt\hbox{,}}
\def\({\left(}
\def\){\right)}
\def\<{\left\langle}
\def\>{\right\rangle}
\def\le{\leqslant}
\def\ge{\geqslant}
\def\1{{\bf 1}}
\def\Tend#1#2{\mathop{\longrightarrow}\limits_{#1\rightarrow#2}}
\def\d{{\partial}}
\def\l{\lambda}
\def\g{\gamma}
\def\Om{\Omega}
\def\si{{\sigma}}
\DeclareMathOperator{\RE}{Re}
\DeclareMathOperator{\IM}{Im}
\numberwithin{equation}{section}
\begin{document}

\title[Extinction for the Schr\"odinger equation]{Finite time
  extinction by nonlinear damping for the Schr\"odinger equation}  

\author[R. Carles]{R\'emi Carles}
\author[C. Gallo]{Cl\'ement Gallo}
\address{Univ. Montpellier~2\\Math\'ematiques \\
  CC~051\\F-34095 Montpellier} 
\address{CNRS, UMR 5149\\  F-34095 Montpellier\\ France}
\email{Remi.Carles@math.cnrs.fr}
\email{cgallo@univ-montp2.fr}

\thanks{This work was supported by the French ANR project
  R.A.S. (ANR-08-JCJC-0124-01)}   
  
\begin{abstract} 
We consider the Schr\"odinger equation on a compact manifold, in the
presence of a nonlinear damping term, which is homogeneous and
sublinear. For initial data in the energy space, we construct a 
weak solution, defined for all positive time, which is shown to be
unique. In the one-dimensional 
case, we show that it becomes zero in finite time. In
the two and three-dimensional cases, we prove the same result under the
assumption of extra regularity on the initial datum.
\end{abstract}
\maketitle

\section{Introduction}
\label{sec:intro}

We consider the Schr\"odinger equation with a homogeneous damping term,
\begin{equation}\label{nls}
i\frac{\partial u}{\partial t}+\Delta u= -i\g\frac{u}{|u|^\alpha},\quad
t\in \R_+,\  x\in M\quad ;\quad u_{\mid t=0}=u_0,
\end{equation}
where $\g>0$, $0<\alpha\le 1$,  $(M,g)$ is a smooth compact Riemannian
manifold of dimension $d$, and $u$ is complex-valued. If $d\le 3$, we
prove that if the initial 
datum $u_0$ is sufficiently regular (in $H^1(M)$ if $d=1$, in $H^2(M)$
if $d=2,3$), then every weak solution to \eqref{nls} becomes zero in
finite time. 
  The reason why the space variable belongs to a compact manifold
  and not to the whole Euclidean space is most likely purely
  technical. It seems sensible to believe that the
  extinction phenomenon that we prove remains true on $\R^d$.
  Typically, only on a compact manifold does $u/|u|$ belong to 
  $L^p_x$ for finite $p$, so the nonlinear term is harder to control
  in the $\R^d$ case. 
\smallbreak

This phenomenon is to be compared with the case of the 
linear damping,
\begin{equation*}
  i\frac{\partial u}{\partial t}+\Delta u= -i\g u.
\end{equation*}
This case is particularly simple, since after the change of unknown function
$v(t,x) = e^{\g t}u(t,x)$, $v$ solves a free Schr\"odinger equation:
its $L^2(M)$ norm does not depend on time, so the $L^2$-norm of $u$
decays exponentially in time. Such a damping term is also used in some physical
models involving an extra interaction nonlinearity, such as a
cubic term; see
e.g. \cite{Fi01} and references therein. Localized linear damping
(replace $\gamma u$ with $a(x)u$) has been considered for control
problems; see e.g. \cite{MaZu94,AlKh07} and references
therein. Stabilization is obtained with an exponential rate in time. 
More recently, nonlinear
damping terms have been considered, but with some homogeneity different
from ours; see e.g. \cite{OhTO09} and references therein. In
\cite{AnSp-p}, the authors consider 
\begin{equation*}
   i\frac{\partial u}{\partial t}+\Delta u= \l |u|^2u -i\g |u|^4
   u,\quad x\in \R^3. 
\end{equation*}
See \cite{AnSp-p} also for references to situations where such a model
is involved.
The nonlinear damping is shown to stabilize the solution, in the sense
that finite time blow-up (which may occur if $\l<0$ and $\g=0$) is
prevented by the damping term ($\g>0$). It can be inferred that the
$L^2$-norm of $u$ goes to zero as time goes to infinity, but probably
more slowly than in the case of a linear damping. Roughly speaking,
the damping is strong only where $u$ is large, so it is less and less
strong as $u$ goes to zero.  
\smallbreak
The damping term present in \eqref{nls} arises in Mechanics (a case
where $u$ is real-valued): in the
case $\alpha=1$, it is 
referred to as \emph{Coulomb friction}. Its effects have been studied
in \cite{AdAtCa06} in the case of ordinary differential equations, and in
\cite{BaCaDi07} in the case of a wave equation. The intermediary case
$0<\alpha<1$ has been studied in the ordinary differential equations
case in \cite{DiLi01,DiLi02,AmDi03}, and the damping term is then
called \emph{strong friction}. As in the case of \eqref{nls}, 
the model one has in mind to understand the dynamics of the equation
is the ordinary differential equation obtained by dropping the
Laplacian in \eqref{nls}:
\begin{equation}
  \label{eq:ode}
  \frac{d u}{d t} = -\g\frac{u}{|u|^\alpha}. 
\end{equation}
Multiplying the above equation by the conjugate of $u$, and setting
$y(t)=|u(t)|^2$, \eqref{eq:ode} yields
\begin{equation*}
  \frac{d y}{dt}= -2\g y^{1-\alpha/2}, 
\end{equation*}
an equation which can be solved explicitly: so long as $y\ge 0$,
\begin{equation*}
  y(t) = \(y(0)^{\alpha/2}-\alpha\g t\)^{2/\alpha}.
\end{equation*}
Therefore, $y$ (hence $u$) becomes zero at time 
\begin{equation*}
  t_c= \frac{|u(0)|^\alpha}{\alpha\g}. 
\end{equation*}
In this paper, we prove a similar phenomenon for weak solutions to
\eqref{nls}. Before stating our main result, we have to specify the
notion of weak solution, especially in the case 
$\alpha=1$, where the right hand side in
\eqref{nls} does not make sense if $u(t,x)=0$.

\begin{definition}[Weak solution, case $0<\alpha<1$]
Suppose $0<\alpha <1$. A (global) weak solution to
  \eqref{nls}  is a function $u \in {\mathcal C}(\R_+;L^2(M))\cap
  L^\infty(\R_+; H^1(M))$ 
  solving \eqref{nls} in 
  ${\mathcal D}'(\R_+^*\times M)$.
\end{definition}
\begin{definition}[Weak solution, case $\alpha=1$]
  Suppose $\alpha =1$. A (global) weak solution to
  \eqref{nls}  is a function $u \in {\mathcal C}(\R_+;L^2(M))\cap
  L^\infty(\R_+; H^1(M))$ 
solving
  \begin{equation*}
  i\frac{\partial u}{\partial t}+\Delta u=  -i\g F  
  \end{equation*}
in ${\mathcal D}'(\R_+^*\times M)$, where $F$ is such that
\begin{equation*}
 \|F\|_{L^\infty(\R_+\times M )} \le 1,\quad \text{and}\quad
 F=\frac{u}{|u|} \text{ if } u\neq 0.
\end{equation*}
\end{definition}
Our main results are as follows.

\begin{theorem}\label{theo:cauchy}
Let $d\ge 1$, $u_0\in H^1(M)$, $\g>0$ and $0<\alpha\le 1$. Then
\eqref{nls} has a unique, global weak solution. 
In addition, it satisfies the \emph{a priori} estimate:
\begin{equation*}
  \|u\|_{L^\infty(\R_+,H^1(M))}\le \|u_0\|_{H^1(M)}. 
\end{equation*}
\end{theorem}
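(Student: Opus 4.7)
My approach is to regularize the singular nonlinearity. For each $\eps>0$, replace $u/|u|^\alpha$ by the globally Lipschitz function $f_\eps(u)=u(|u|^2+\eps^2)^{-\alpha/2}$, and solve the resulting semilinear Schr\"odinger equation by a Picard iteration based on the Duhamel formula for the unitary group $e^{it\Delta}$. This produces, for each $\eps>0$, a unique global solution $u_\eps\in\cC(\R_+;H^1(M))$ of the regularized equation.

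The heart of the argument is a uniform $H^1$ bound on $u_\eps$. Pairing the regularized equation with $\bar u_\eps$ and with $-\Delta\bar u_\eps$ in $L^2(M)$, and taking imaginary parts, yields
\[ \tfrac{1}{2}\tfrac{d}{dt}\|u_\eps\|_{L^2}^2\le 0,\qquad \tfrac{1}{2}\tfrac{d}{dt}\|\nabla u_\eps\|_{L^2}^2 = -\g\,\RE\int_M\nabla f_\eps(u_\eps)\cdot\nabla\bar u_\eps\,dx. \]
A direct computation of the integrand on the right gives the pointwise identity
\[ \RE\bigl(\nabla f_\eps(u)\cdot\nabla\bar u\bigr)=(|u|^2+\eps^2)^{-\alpha/2}\Bigl(|\nabla u|^2-\alpha\,\frac{|u|^2}{|u|^2+\eps^2}\,|\nabla|u||^2\Bigr), \]
which is nonnegative as soon as $\alpha\le 1$, by Kato's inequality $|\nabla|u||\le|\nabla u|$ and $|u|^2/(|u|^2+\eps^2)\le 1$. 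This yields $\|u_\eps(t)\|_{H^1}\le\|u_0\|_{H^1}$ uniformly in $\eps$ and $t$; the formal calculation involving $\Delta\bar u_\eps$ is justified by first carrying it out for $H^2$ data (for which standard semigroup theory provides $u_\eps\in\cC(\R_+;H^2(M))$) and then passing to the limit in the initial datum.

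To take $\eps\to 0$: the bound above on $u_\eps$ in $L^\infty(\R_+;H^1(M))$ together with the equation itself gives $\d_t u_\eps$ bounded in $L^\infty(\R_+;H^{-1}(M))$, since $|f_\eps(u)|\le|u|^{1-\alpha}$ is uniformly bounded in $L^2$ on the compact $M$. Aubin-Lions then produces a subsequence converging strongly in $\cC([0,T];L^2(M))$ for every $T$ and weakly-$*$ in $L^\infty(\R_+;H^1(M))$ to some $u$, which in particular satisfies $\|u\|_{L^\infty(\R_+;H^1)}\le\|u_0\|_{H^1}$ by weak lower semicontinuity, and $u(0)=u_0$ by strong $L^2$ convergence. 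For $0<\alpha<1$, dominated convergence identifies the limit of $f_\eps(u_\eps)$ with $u/|u|^\alpha$ in $L^p_{\rm loc}$; for $\alpha=1$, $f_\eps(u_\eps)$ is bounded in $L^\infty$ and any weak-$*$ limit $F$ satisfies $\|F\|_{L^\infty}\le 1$ and agrees with $u/|u|$ a.e.\ on $\{u\neq 0\}$, matching the notion of weak solution. For uniqueness, letting $w=u-v$ be the difference of two weak solutions and pairing the equation for $w$ with $\bar w$, the monotonicity identity
\[ \RE\bigl[(F_u-F_v)(\bar u-\bar v)\bigr]\ge(|u|^{1-\alpha}-|v|^{1-\alpha})(|u|-|v|)\ge 0 \]
(which, in the $\alpha=1$ selection case, is proved directly from $\RE(F_w\bar w)=|w|$ and $|F_w|\le 1$) yields $\frac{d}{dt}\|w\|_{L^2}^2\le 0$, hence $w\equiv 0$.

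I expect the main difficulty to be the identification of the weak-$*$ limit of the nonlinearity in the borderline case $\alpha=1$, where $u/|u|$ is discontinuous at the zero set of $u$: the flexibility built into the definition of weak solution (through the auxiliary selection $F$) is precisely what accommodates this obstruction, while the monotonicity of the nonlinearity $u\mapsto u/|u|^\alpha$ -- the same structural property that powers the $H^1$ a priori bound -- is what makes uniqueness survive the non-Lipschitz (indeed, for $\alpha=1$, multi-valued) character of the damping term.
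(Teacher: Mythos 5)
Your proposal is correct and follows essentially the same route as the paper: regularize the nonlinearity by $u(|u|^2+\delta)^{-\alpha/2}$, derive uniform $L^2$ and $H^1$ dissipation identities (your pointwise formula for $\RE(\nabla f_\eps(u)\cdot\nabla\bar u)$ is exactly the paper's energy identity rewritten via $|\RE(\bar u\nabla u)|^2=|u|^2|\nabla|u||^2$), pass to the limit by compactness in $\cC([0,T];L^2)$ with a.e.\ identification of the nonlinearity (and a weak-$*$ selection $F$ when $\alpha=1$), and prove uniqueness from the monotonicity of $z\mapsto z/|z|^\alpha$. The only differences are cosmetic: Aubin--Lions in place of the paper's Arzel\`a--Ascoli argument, and the factorization $(\rho_1^{1-\alpha}-\rho_2^{1-\alpha})(\rho_1-\rho_2)\ge 0$ in place of the paper's Young-inequality proof of the monotonicity lemma.
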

\begin{remark}[Uniqueness]
  Since the nonlinearity in \eqref{nls} is not Lipschitzean,
  uniqueness does not come from completely standard arguments. Note
  that since we consider complex-valued functions, the monotonicity
  arguments invoked in \cite{BaCaDi07} (after \cite{Br72}) cannot be
  used. Uniqueness relies in a crucial manner on the dissipation
  associated to the equation. 
\end{remark}
In the multidimensional setting, our argument to prove finite time
stabilization requires some extra regularity:
\begin{theorem}\label{theo:persist}
Let $d\le 3$, $u_0\in H^2(M)$, $\g>0$ and $0<\alpha\le 1$. Then the
solution of
\eqref{nls} belongs to
$L^\infty(\R_+,H^2(M))$. In addition, there exists $C$, depending only
on $\|u_0\|_{H^2(M)}$, $M$ and $\g$, such that:
\begin{equation*}
  \|u\|_{L^\infty(\R_+,H^2(M))}\le C. 
\end{equation*}\end{theorem}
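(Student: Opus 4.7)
The natural strategy is to regularize the singular nonlinearity by $f_\varepsilon(u):=u(|u|^2+\varepsilon)^{-\alpha/2}$, for which standard semilinear theory supplies a global solution $u^\varepsilon\in C(\R_+;H^2(M))\cap C^1(\R_+;L^2(M))$ with $u^\varepsilon(0)=u_0$, and to establish an $H^2$ bound on $u^\varepsilon$ that is uniform in $\varepsilon$. Once such a bound is available, the convergence $u^\varepsilon\to u$ in $C([0,T];L^2(M))$ (built into the construction of $u$) combined with weak-$\ast$ compactness in $L^\infty(\R_+;H^2(M))$ transfers the bound to the weak solution $u$, using the uniqueness in Theorem~\ref{theo:cauchy}.

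The key computation is to differentiate the regularized equation in time. With $v^\varepsilon:=\partial_t u^\varepsilon$, one obtains the linear Schr\"odinger-type equation
\begin{equation*}
i\partial_t v^\varepsilon+\Delta v^\varepsilon=-i\g\,Df_\varepsilon(u^\varepsilon)v^\varepsilon,\qquad Df_\varepsilon(u)v=\frac{v}{(|u|^2+\varepsilon)^{\alpha/2}}-\alpha\,\frac{u\,\RE(\bar u v)}{(|u|^2+\varepsilon)^{\alpha/2+1}}.
\end{equation*}
Multiplying by $\overline{v^\varepsilon}$, integrating on $M$, and taking imaginary parts gives
\begin{equation*}
\frac12\frac{d}{dt}\|v^\varepsilon\|_{L^2}^2=-\g\int_M\frac{|v^\varepsilon|^2}{(|u^\varepsilon|^2+\varepsilon)^{\alpha/2}}\,dx+\g\alpha\int_M\frac{\left(\RE(\overline{u^\varepsilon}v^\varepsilon)\right)^2}{(|u^\varepsilon|^2+\varepsilon)^{\alpha/2+1}}\,dx.
\end{equation*}
The algebraic inequality $(\RE(\bar u v))^2\le|u|^2|v|^2\le(|u|^2+\varepsilon)|v|^2$, together with $0<\alpha\le 1$, then forces the right-hand side to be nonpositive, so $\|v^\varepsilon(t)\|_{L^2}$ is nonincreasing. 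At $t=0$, $v^\varepsilon(0)=i\Delta u_0-\g f_\varepsilon(u_0)$, and the pointwise bound $|f_\varepsilon(u_0)|\le|u_0|^{1-\alpha}$ together with H\"older on the compact manifold $M$ gives $\|v^\varepsilon(0)\|_{L^2}\le\|\Delta u_0\|_{L^2}+\g|M|^{\alpha/2}\|u_0\|_{L^2}^{1-\alpha}$, a bound independent of $\varepsilon$.

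To conclude, I would rewrite the equation as $\Delta u^\varepsilon=-iv^\varepsilon-i\g f_\varepsilon(u^\varepsilon)$ and invoke the $L^\infty(\R_+;L^2)$-bound on $u^\varepsilon$ inherited from Theorem~\ref{theo:cauchy} (using the same H\"older argument on $f_\varepsilon(u^\varepsilon)$) to deduce a uniform bound on $\|\Delta u^\varepsilon\|_{L^2}$, hence on $\|u^\varepsilon\|_{H^2(M)}$. The heart of the proof is the algebraic ``miracle'' $(\RE(\bar u v))^2\le(|u|^2+\varepsilon)|v|^2$, which makes $\partial_t u$ a dissipated quantity precisely when $\alpha\le 1$; everything else consists in justifying the time differentiation rigorously (which is where the regularization is essential, since $Df_\varepsilon$ is bounded for fixed $\varepsilon$) and passing to the limit $\varepsilon\to 0$. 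The dimensional restriction $d\le 3$ enters through the Sobolev embedding $H^2(M)\hookrightarrow L^\infty(M)$, which makes the regularized Cauchy theory at the $H^2$ level and the identification of the limiting nonlinear term straightforward.
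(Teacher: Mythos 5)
Your proposal is correct and follows essentially the same route as the paper: regularize the nonlinearity, differentiate the regularized equation in time, observe that $\|\partial_t u^\varepsilon(t)\|_{L^2}$ is non-increasing (the paper records this as an exact identity in Proposition~\ref{prop:cauchydel2}, using the decomposition $|u|^2|v|^2=(\RE(\bar u v))^2+(\IM(\bar u v))^2$, while you use the equivalent inequality $(\RE(\bar u v))^2\le(|u|^2+\varepsilon)|v|^2$ with $\alpha\le 1$), bound the time derivative at $t=0$, recover $\Delta u^\varepsilon$ from the equation, and pass to the limit. This is precisely Kato's argument that the paper invokes, so no further comparison is needed.
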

We will see that for such weak solutions, a complete dissipation
occurs in finite time:
\begin{theorem}\label{theo:vanish}
Let $d\le 3$, $u_0\in H^1(M)$, $\g>0$ and $0<\alpha\le 1$. If
$d=2,3$, suppose in addition that $u_0\in H^2(M)$. Then there exists
$T>0$ such that the (unique) weak solution to \eqref{nls} satisfies
\begin{equation*}
\text{for every }\quad t\ge T,\quad u(t,x)=0,\quad \text {for almost
  every } x\in M.  
\end{equation*}
\end{theorem}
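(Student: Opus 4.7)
The plan is to mimic the ODE analysis \eqref{eq:ode} at the PDE level, deriving an ordinary differential inequality for $y(t):=\|u(t)\|_{L^2(M)}^2$ of the form $y'(t)\le -C\,y(t)^\beta$ with some $\beta<1$, which automatically forces $y$ to vanish in finite time.

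\emph{Step 1 (dissipation identity).} Multiplying \eqref{nls} by $\bar u$, integrating on $M$, and taking imaginary parts (the Laplacian contribution $-\|\nabla u\|_{L^2}^2$ is real), one gets at least formally
\begin{equation*}
\frac{1}{2}\frac{d}{dt}\|u(t)\|_{L^2(M)}^2 = -\gamma\,\|u(t)\|_{L^{2-\alpha}(M)}^{2-\alpha},
\end{equation*}
where the right-hand side is interpreted as $-\gamma\|u\|_{L^1}$ when $\alpha=1$ (using that $\RE(F\bar u)=|u|$ a.e., regardless of the choice of $L^\infty$-selection $F$ on $\{u=0\}$). For the weak solutions of Theorem~\ref{theo:cauchy}, this identity (or at least the corresponding inequality $y'(t)\le -2\gamma\|u(t)\|_{L^{2-\alpha}}^{2-\alpha}$) must be recovered from the regularization scheme producing $u$, by passing to the limit using Fatou/lower-semicontinuity.

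\emph{Step 2 (interpolation and Gagliardo--Nirenberg).} H\"older's inequality gives
\begin{equation*}
\|u\|_{L^2}^2 = \int_M |u|^{2-\alpha}|u|^\alpha\,dx \le \|u\|_{L^\infty}^\alpha\,\|u\|_{L^{2-\alpha}}^{2-\alpha},
\end{equation*}
hence $\|u\|_{L^{2-\alpha}}^{2-\alpha}\ge \|u\|_{L^\infty}^{-\alpha}\|u\|_{L^2}^2$. By the Gagliardo--Nirenberg inequality on $M$,
\begin{equation*}
\|u\|_{L^\infty}\le C\,\|u\|_{H^k}^{d/(2k)}\,\|u\|_{L^2}^{1-d/(2k)},
\end{equation*}
valid as soon as $2k>d$: we take $k=1$ when $d=1$ and $k=2$ when $d\in\{2,3\}$. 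In either case $\|u(t)\|_{H^k}$ is bounded in $t$ by a constant depending only on $u_0$, $M$, $\gamma$, thanks to Theorem~\ref{theo:cauchy} (when $k=1$) or Theorem~\ref{theo:persist} (when $k=2$). Combining,
\begin{equation*}
\|u(t)\|_{L^{2-\alpha}}^{2-\alpha} \ge C_1\,y(t)^\beta, \qquad \beta = 1 - \frac{\alpha}{2}\Bigl(1-\frac{d}{2k}\Bigr) < 1.
\end{equation*}

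\emph{Step 3 (ODI and uniqueness).} Plugging Step~2 into Step~1 yields $y'(t)\le -2\gamma C_1\,y(t)^\beta$ with $\beta<1$. Integration gives
\begin{equation*}
y(t)^{1-\beta}\le y(0)^{1-\beta} - 2\gamma C_1(1-\beta)\,t,
\end{equation*}
so $y$ reaches $0$ at some finite time $T\le y(0)^{1-\beta}/[2\gamma C_1(1-\beta)]$. For $t\ge T$, the uniqueness part of Theorem~\ref{theo:cauchy}, applied from time $T$ with zero initial datum (the trivial solution $u\equiv 0$ is admissible), forces $u(t,\cdot)=0$ a.e.\ on $M$.

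\emph{Main obstacle.} The delicate step is Step~1: rigorously justifying the dissipation (in)equality at the level of a \emph{weak} solution, particularly in the case $\alpha=1$ where the right-hand side of \eqref{nls} is defined only up to an $L^\infty$-selection $F$. This requires returning to the approximation scheme used to build $u$ in Theorem~\ref{theo:cauchy}, writing the exact energy balance at the approximate level, and passing to the limit using lower-semicontinuity of $\|\cdot\|_{L^{2-\alpha}}^{2-\alpha}$ together with almost-everywhere convergence.
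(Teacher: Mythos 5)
Your proposal is correct and its skeleton coincides with the paper's: obtain the dissipation law $\frac{d}{dt}\|u\|_{L^2}^2=-2\g\|u\|_{L^{2-\alpha}}^{2-\alpha}$, bound $\|u\|_{L^{2-\alpha}}^{2-\alpha}$ from below by $C\,\|u\|_{L^2}^{2\beta}$ with $\beta<1$ using the uniform $H^1$ (resp.\ $H^2$) bound, and integrate the resulting ordinary differential inequality. The one genuinely different ingredient is your Step 2: where the paper proves a Nash-type inequality (Lemma~\ref{lem:nash}) by Plancherel, a frequency-space splitting and Hausdorff--Young on $\R^d$, then transports it to $M$ by charts, you instead combine the elementary H\"older bound $\|u\|_{L^2}^2\le\|u\|_{L^\infty}^\alpha\|u\|_{L^{2-\alpha}}^{2-\alpha}$ with the Gagliardo--Nirenberg estimate $\|u\|_{L^\infty}\le C\|u\|_{H^k}^{d/(2k)}\|u\|_{L^2}^{1-d/(2k)}$ for $2k>d$. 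This yields exactly the same exponents ($2-\alpha/2$ for $d=1$ with $k=1$, and $2-(1-d/4)\alpha$ for $d=2,3$ with $k=2$) with a shorter and arguably more transparent argument; the paper's route has the merit of isolating a general family of Nash inequalities \eqref{eq:nashgen} valid for all $s>0$, which explains the numerology uniformly in $s$ and makes clear why $d=1$ is the only case reachable with mere $H^1$ regularity. Two smaller remarks. First, the point you flag as the main obstacle --- justifying the dissipation identity for a weak solution, including the $\alpha=1$ selection issue --- is resolved in the paper not through the approximation scheme but directly: Proposition~\ref{prop:mass} shows that any weak solution in $\mathcal{C}(\R_+,L^2)\cap L^\infty(\R_+,H^1)$ satisfies $u(t)\in H^1(M)$ for every $t$, hence $\d_t u(t)\in H^{-1}(M)$, and the identity \eqref{eq:mass} follows from the $H^{-1}$--$H^1$ duality pairing; your Fatou/lower-semicontinuity route through the regularization would only produce the integrated inequality, but that is indeed sufficient for the extinction argument. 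Second, to conclude that $u$ stays at zero after $T$ the paper simply uses that $\|u(t)\|_{L^2}$ is non-increasing, which is marginally lighter than invoking uniqueness from time $T$, though your argument is also valid.
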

\begin{remark}
  With the results of \cite{MaZu94,AmDi03} in mind, it would seem
  interesting to consider an equation of the form
\begin{equation*}
i\frac{\partial u}{\partial t}+\Delta u= -ia(x)\frac{u}{|u|^\alpha}
\end{equation*}
with $a\ge 0$, to stabilize $u$ in finite time on the set $\{a>0\}$
(one may think of $a$ as an indicator function). 
\end{remark}
As a corollary to our approach, we can prove the same phenomenon for
the ``usual'' nonlinear Schr\"odinger equation perturbed by the
damping term that we consider in this paper, provided 
  that $d=1$ and
that no finite
time blow-up occurs without damping:
\begin{corollary}\label{cor:stabnls}
  Let $d=1$, $u_0\in H^1(M)$, $\g,\si>0$, $0<\alpha\le 1$ and $\l\in
  \R$. Assume in addition $\si<2$ if $\l<0$. Then there exists
$T>0$ such that the (unique) weak solution to 
\begin{equation}\label{eq:nls2}
  i\frac{\d u}{\d t}+\Delta u = \l |u|^{2\si}u -i\g
  \frac{u}{|u|^\alpha}\quad ;\quad u_{\mid t=0}=u_0
\end{equation}
satisfies: for every $t\ge T$, $u(t,x)=0$ for almost every $x\in M$.
  \end{corollary}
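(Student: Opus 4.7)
My plan is to mimic the one-dimensional proof of Theorem~\ref{theo:vanish}, after establishing the analogues of Theorems~\ref{theo:cauchy} and~\ref{theo:persist} for the perturbed equation \eqref{eq:nls2}. Two structural observations make this realistic. First, in $d=1$ the Sobolev embedding $H^1(M)\hookrightarrow L^\infty(M)$ makes the map $u\mapsto \l|u|^{2\si}u$ locally Lipschitz on $H^1(M)$, so the approximation scheme and compactness argument used for Theorem~\ref{theo:cauchy} should extend with only cosmetic changes. Second, the NLS nonlinearity is gauge invariant, so multiplying \eqref{eq:nls2} by $\bar u$ and taking imaginary parts yields the same $L^2$ dissipation identity as for \eqref{nls}:
\begin{equation*}
\frac{d}{dt}\|u(t)\|_{L^2(M)}^2 = -2\g\,\|u(t)\|_{L^{2-\alpha}(M)}^{2-\alpha}.
\end{equation*}

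First I would construct, and prove uniqueness of, the global weak solution. Existence follows from a regularized scheme as in Theorem~\ref{theo:cauchy}; the dissipation-based uniqueness argument of that theorem should be reproduced by adding a Gronwall absorption for the Lipschitz difference $\l(|u_1|^{2\si}u_1-|u_2|^{2\si}u_2)$, controlled via the $L^\infty$ bound on $u_1,u_2$. Next I would establish a uniform $H^1$ bound. Computing the time derivative of the NLS energy $E(u)=\tfrac12\|\nabla u\|_{L^2}^2+\tfrac{\l}{2\si+2}\|u\|_{L^{2\si+2}}^{2\si+2}$, the Hamiltonian flow conserves $E$, so only the damping contributes, producing a Kato-type non-positive piece plus a cross-term $-\l\g\int_M |u|^{2\si+2-\alpha}$. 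When $\l\ge 0$ this cross-term is non-positive, and $E$ directly controls $\|\nabla u\|_{L^2}^2$. When $\l<0$ with $\si<2$, the one-dimensional Gagliardo--Nirenberg inequality
\begin{equation*}
\|u\|_{L^{2\si+2}}^{2\si+2}\le C\|u\|_{L^2}^{\si+2}\|u_x\|_{L^2}^{\si},
\end{equation*}
combined with the uniform $L^2$ bound from the mass identity and Young's inequality (using $\si<2$), lets me absorb the focusing potential into a small fraction of $\|u_x\|_{L^2}^2$ and close a Gronwall estimate for $E(u(t))$, yielding $\|u\|_{L^\infty(\R_+;H^1)}<+\infty$.

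Granting $\|u(t)\|_{H^1}\le C_0$ uniformly, the extinction argument proceeds as in the $d=1$ case of Theorem~\ref{theo:vanish}. The one-dimensional interpolation $\|u\|_{L^\infty}\le C\|u\|_{L^2}^{1/2}\|u\|_{H^1}^{1/2}$ combined with H\"older's inequality gives
\begin{equation*}
\|u\|_{L^2}^2\le \|u\|_{L^\infty}^{\alpha}\|u\|_{L^{2-\alpha}}^{2-\alpha}\le C\|u\|_{L^2}^{\alpha/2}\|u\|_{L^{2-\alpha}}^{2-\alpha},
\end{equation*}
so $\|u\|_{L^{2-\alpha}}^{2-\alpha}\ge c\,\|u\|_{L^2}^{2-\alpha/2}$ with a constant $c$ depending only on $C_0$, $M$, $\g$, $\alpha$. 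Plugging this into the $L^2$ dissipation identity, the function $y(t):=\|u(t)\|_{L^2}^2$ satisfies $y'(t)\le -c\,y(t)^{1-\alpha/4}$, a sub-linear ODE that forces $y(t)=0$ for every $t\ge T:=\tfrac{4}{c\alpha}y(0)^{\alpha/4}$, which is the claimed extinction.

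The hardest point, in my view, is the uniform $H^1$ bound in the focusing case $\l<0$: once the $\l$-term enters, the damping no longer manifestly dissipates the NLS energy, and one must carefully trade the sign-indefinite cross-terms against the subcriticality condition $\si<2$, exactly as in standard global-existence arguments for one-dimensional focusing NLS. The rest is a direct adaptation of Theorems~\ref{theo:cauchy}--\ref{theo:vanish} to accommodate one additional $H^1$-subcritical, gauge-invariant nonlinearity.
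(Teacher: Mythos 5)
Your overall route is the paper's: regularize the damping, keep the mass identity \eqref{eq:mass} (unchanged since $\l\in\R$), obtain a uniform-in-time $H^1$ bound (immediate if $\l\ge 0$, via the one-dimensional Gagliardo--Nirenberg inequality and $\si<2$ if $\l<0$), prove uniqueness by combining the dissipation argument of Section~\ref{sec:unique} with the local Lipschitz character of $|u|^{2\si}u$ on $H^1(M)\hookrightarrow L^\infty(M)$, and then run the $d=1$ extinction argument. Your interpolation $\|u\|_{L^2}^2\le\|u\|_{L^\infty}^{\alpha}\|u\|_{L^{2-\alpha}}^{2-\alpha}$ combined with $\|u\|_{L^\infty}\lesssim\|u\|_{L^2}^{1/2}\|u\|_{H^1}^{1/2}$ is just a repackaging of the Nash-type inequality \eqref{eq:nash1} for $d=1$, and the final ODE step is identical.

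The one step I would not accept as written is the uniform $H^1$ bound in the focusing case, which you propose to obtain by ``clos[ing] a Gronwall estimate for $E(u(t))$''. A Gronwall estimate yields at best $E(t)\le E(0)e^{Ct}$, hence $\|u(t)\|_{H^1}\le Ce^{Ct}$, and the paper explicitly remarks after \eqref{eq:massineq1} that an exponentially growing $H^1$ bound is \emph{not} sufficient to conclude finite-time extinction: what is needed is $\|u\|_{L^\infty(\R_+,H^1)}<\infty$. The argument that actually closes is different in nature: the damping contributes a \emph{non-positive} term to $\frac{d}{dt}E^\delta$, up to the cross-term $-2\g\l\int_M|u^\delta|^{2\si+2}(|u^\delta|^2+\delta)^{-\alpha/2}$ that you correctly identify. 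For $\l\ge 0$ this term has a favorable sign and $E^\delta$ is non-increasing, which is all the paper uses. For $\l<0$ its time integral is bounded by $\sup_{[0,t]}\|u^\delta\|_{L^\infty}^{2\si}$ times $\int_0^t\int_M|u^\delta|^2(|u^\delta|^2+\delta)^{-\alpha/2}$, and the latter is at most $\|u_0\|_{L^2}^2/(2\g)$ uniformly in $t$ by the mass identity; combining this with $\|u\|_{L^\infty}^{2\si}\lesssim\|u\|_{L^2}^{\si}\|u\|_{H^1}^{\si}$, the recovery of $\|\nabla u^\delta\|_{L^2}^2$ from $E^\delta$ (again Gagliardo--Nirenberg), and the strict inequality $\si<2$, a continuity/bootstrap argument in $\sup_{[0,t]}\|u^\delta\|_{H^1}$ gives a bound independent of $t$ and of $\delta$. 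With that replacement for the word ``Gronwall'', your proof coincides with the paper's.
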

  \begin{remark}
    The notion of weak solution for \eqref{eq:nls2} is easily adapted,
    as well as the proof of uniqueness,
    since for $d=1$, $H^1(M)\hookrightarrow 
    L^\infty(M)$. Proving the analogue of Corollary~\ref{cor:stabnls}
    in a multi-dimensional framework, or in cases where finite time
    blow-up occurs when $\g=0$ (e.g., $d=1$, $\l<0$ and $\si\ge 2$),
    seems to be an interesting open question. Note however that the
    results in \cite{MeRa04} and the fact that the present damping is
    sublinear suggest that there is no universal conclusion in that
    case: in the competition between finite time blow-up
    and dissipation, either of the two effects may win. 
  \end{remark}

\section{Existence results}
\label{sec:exist}

To prove the existence part of Theorem~\ref{theo:cauchy}, we first
regularize the nonlinearity to construct a mild solution, and then pass
to the limit. This procedure allows us to prove
Theorem~\ref{theo:persist} too. Uniqueness is established in
Section~\ref{sec:unique}.  

\subsection{Construction of an approximating sequence}
\label{sec:sequence}

In order to construct a solution of \eqref{nls} as in
Theorem~\ref{theo:cauchy}, we solve first, for $\delta >0$, the equation 
\begin{equation}\label{nlsdel}
i\frac{\partial u^\delta}{\partial t}+\Delta u^\delta=
f_\delta(u^\delta):=-i\g\frac{u^\delta}{(|u^\delta|^2+
\delta)^{\alpha/2}}.  
\end{equation}

\begin{proposition}\label{cauchydel}
Let $\delta>0$, $u_0\in L^2( M)$. There exists a unique $u^\delta\in
\mathcal{C}(\R_+,L^2( M))$ such that 
\begin{equation*}
  u^\delta(t)=e^{it\Delta}u_0-i\int_0^te^{i(t-\tau)\Delta}
f_\delta(u^\delta(\tau))d\tau,\quad 
t\in\R_+.
\end{equation*}
If moreover $u_0\in H^s( M)$ for some $s\ge 0$, then
$u\in\mathcal{C}(\R_+,H^s( M))\cap\mathcal{C}^1(\R_+,H^{s-2}( M))$. The 
flow map
\begin{equation*}
  \begin{array}{rcl}
H^s( M)&\to & \mathcal{C}(\R_+,H^s( M))\\
u_0 &\mapsto & u^\delta
\end{array}
\end{equation*}
is continuous. If $u_0\in H^1( M)$, for every $t\ge 0$, we have 
\begin{equation}\label{massdel}
\|u^\delta(t)\|_{L^2( M)}^2+2\g\int_0^t\int_{
  M}\frac{|u^\delta(\tau)|^2}{(|u^\delta(\tau)|^2+
\delta)^{\alpha/2}}dxd\tau=\|u_0\|_{L^2(
  M)}^2, 
\end{equation}
\begin{equation}\label{energydel}
  \begin{aligned}
   &\|\nabla  u^\delta(t)\|_{L^2( M)}^2-\|\nabla u_0\|_{L^2( M)}^2=\\
& -2\g\int_0^t\int_{
    M}\frac{\delta|\nabla
    u^\delta|^2+(1-\alpha)\lvert \RE(\overline{u^\delta}\nabla
    u^\delta)\rvert^2+\lvert\IM(\overline{u^\delta}\nabla
    u^\delta)\rvert^2}{(|u^\delta|^2+\delta)^{\alpha/2+1}}(\tau,x)dxd\tau. 
  \end{aligned}
\end{equation}
\end{proposition}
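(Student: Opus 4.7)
The plan is to exploit the fact that for $\delta>0$ fixed the regularized nonlinearity $f_\delta(z) = -i\g z(|z|^2+\delta)^{-\alpha/2}$ is globally Lipschitz on $\C$, reducing \eqref{nlsdel} to a standard semilinear problem in the Hilbert space $L^2(M)$. Since $|z|^2+\delta \ge \delta$ is bounded away from zero, the map $z\mapsto z(|z|^2+\delta)^{-\alpha/2}$ is smooth; viewing $\C\simeq\R^2$, a direct computation shows its first derivatives are bounded by $C\g\delta^{-\alpha/2}$ uniformly in $z$. Hence $f_\delta$ is globally Lipschitz, with some constant $L_\delta$, and so is the associated Nemytskii operator on $L^2(M)$.

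Since $e^{it\Delta}$ is an isometry on $L^2(M)$, the Duhamel map is a contraction on $\mathcal{C}([0,T];L^2(M))$ as soon as $T$ is small enough in terms of $L_\delta$ alone; iterating on intervals of this length produces a unique mild solution $u^\delta\in\mathcal{C}(\R_+;L^2(M))$. Persistence in $H^s(M)$ is obtained by rerunning the fixed point in $\mathcal{C}([0,T];H^s(M))$: the smoothness of $f_\delta$ together with compactness of $M$ makes it locally Lipschitz on $H^s$ for any $s\ge 0$, and the resulting solution coincides with the $L^2$-solution by uniqueness. The $\mathcal{C}^1(\R_+;H^{s-2})$ statement is then read off the equation $\partial_t u^\delta = i\Delta u^\delta - i f_\delta(u^\delta)$, and continuity of the flow map comes from a Gronwall estimate on the difference of two Duhamel integrals, applied on each compact subinterval of $\R_+$.

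For the mass identity \eqref{massdel}, I would multiply \eqref{nlsdel} by $\overline{u^\delta}$, integrate over $M$, and take imaginary parts: the time derivative contributes $\tfrac{1}{2}\partial_t\|u^\delta\|_{L^2}^2$, the Laplacian term is real (after integration by parts) and drops out, and the nonlinearity leaves precisely the dissipation appearing in \eqref{massdel}. The computation is rigorous under the assumption $u_0\in H^1$ since $u^\delta\in\mathcal{C}(\R_+;H^1)\cap\mathcal{C}^1(\R_+;H^{-1})$.

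The energy identity \eqref{energydel} is the main obstacle and I expect it to require the most care. Formally one pairs \eqref{nlsdel} with $\partial_t\overline{u^\delta}$ and takes real parts; using $\RE(i|\partial_t u^\delta|^2)=0$ and integrating the kinetic term by parts in $x$ gives
\begin{equation*}
\tfrac{1}{2}\tfrac{d}{dt}\|\nabla u^\delta\|_{L^2}^2 = \g\int_M \RE\bigl(u^\delta\,\Delta\overline{u^\delta}\bigr)(|u^\delta|^2+\delta)^{-\alpha/2}\,dx.
\end{equation*}
A further integration by parts of the right-hand side, combined with the pointwise identities $\nabla(|u^\delta|^2+\delta)^{-\alpha/2} = -\alpha(|u^\delta|^2+\delta)^{-\alpha/2-1}\RE(\overline{u^\delta}\nabla u^\delta)$ and $|u^\delta|^2|\nabla u^\delta|^2 = |\RE(\overline{u^\delta}\nabla u^\delta)|^2 + |\IM(\overline{u^\delta}\nabla u^\delta)|^2$, reorganizes the integrand into the manifestly nonpositive expression of \eqref{energydel}. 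The regularity needed to justify pairing with $\partial_t\overline{u^\delta}$ (essentially $u^\delta\in\mathcal{C}(\R_+;H^2)$, so that $\partial_t u^\delta\in L^2$) is first secured for $u_0\in H^2$ via the persistence part already established, and the identity is extended to $u_0\in H^1$ by approximating $u_0$ with smoother data and passing to the limit using continuity of the flow map.
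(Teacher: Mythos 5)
Your proposal is correct and follows essentially the same route as the paper: a contraction argument in $\mathcal{C}([0,T];L^2(M))$ using the global Lipschitz bound $\lvert\nabla f_\delta\rvert\lesssim\g\delta^{-\alpha/2}$, persistence in $H^s$ with Gronwall for flow continuity, and the two identities obtained by pairing the equation with $\overline{u^\delta}$ (imaginary part) and with $\d_t\overline{u^\delta}$ respectively, first for smoother data and then by density and continuity of the flow map. Your detailed integration by parts for \eqref{energydel}, using $\nabla(|u^\delta|^2+\delta)^{-\alpha/2}=-\alpha(|u^\delta|^2+\delta)^{-\alpha/2-1}\RE(\overline{u^\delta}\nabla u^\delta)$ and the decomposition $|u^\delta|^2|\nabla u^\delta|^2=|\RE(\overline{u^\delta}\nabla u^\delta)|^2+|\IM(\overline{u^\delta}\nabla u^\delta)|^2$, matches the computation the paper only sketches.
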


\begin{proof} Since $f_\delta\in\mathcal{C}^\infty(\C,\C)$ is globally
  Lipschitzean, the global well-posedness in $H^s$ and the continuity
  of the flow 
  map are well known, and follow from the standard fixed point
  argument and Gronwall lemma (see e.g. \cite{CaHa98}). The identity
  \eqref{massdel} is first obtained for $u_0\in  
  H^s$ with $s$ large, multiplying  \eqref{nlsdel} by the conjugate
  of 
  $u^\delta$, taking the imaginary part and integrating in space and
  time. The identity \eqref{massdel} is then obtained for $u_0\in L^2( M)$
  thanks to the continuity of 
  the flow map and the density of $H^s$ in $L^2$. Similarly,
  \eqref{energydel} is first obtained for $u_0\in
  H^s$ with $s$ large, multiplying \eqref{nlsdel} by
  $\partial_t\overline{u^\delta}$, taking the imaginary part, and
  integrating. Alternatively, \eqref{energydel} can be obtained
  formally by applying
  the operator $\nabla$ to \eqref{nlsdel}, multiplying the result by
  $\nabla \overline{u^\delta}$, taking the imaginary part, and
  integrating. 
\end{proof}

To prove Theorem~\ref{theo:persist}, we will also use the following
result:
\begin{proposition}\label{prop:cauchydel2}
Let $\delta>0$, $u_0\in H^2( M)$, and $u^\delta\in
\mathcal{C}(\R_+,H^2( M))\cap \mathcal{C}^1(\R_+,L^2(M))$ 
be as in Proposition~\ref{cauchydel}. Then for every
$t\ge 0$, we have  
\begin{equation}
  \label{eq:evoldtu}
 \begin{aligned}
   &\|\d_t  u^\delta(t)\|_{L^2( M)}^2-\|\d_t u^\delta(0)\|_{L^2( M)}^2=\\
& -2\g\int_0^t\int_{
    M}\frac{\delta|\d_t
    u^\delta|^2+(1-\alpha)|\RE(\overline{u^\delta}\d_t
    u^\delta)|^2+|\IM(\overline{u^\delta}\d_t
    u^\delta)|^2}{(|u^\delta|^2+\delta)^{\alpha/2+1}}(\tau,x)dxd\tau. 
  \end{aligned}  
\end{equation}
\end{proposition}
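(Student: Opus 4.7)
The strategy parallels the derivation of \eqref{energydel}, but with the operator $\partial_t$ playing the role of $\nabla$. We first carry out the computation formally and then justify it by approximation.

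\textbf{Formal differentiation in time.} Since $\delta>0$, the function $f_\delta\in\mathcal{C}^\infty(\C,\C)$, viewed as a real-smooth map, has derivative
\begin{equation*}
\partial_t f_\delta(u^\delta)=-i\g\left[\frac{\partial_t u^\delta}{(|u^\delta|^2+\delta)^{\alpha/2}}-\frac{\alpha\,u^\delta\,\RE(\overline{u^\delta}\,\partial_t u^\delta)}{(|u^\delta|^2+\delta)^{\alpha/2+1}}\right].
\end{equation*}
Differentiating \eqref{nlsdel} in $t$ and setting $v:=\partial_t u^\delta$, I obtain the linear Schr\"odinger equation
\begin{equation*}
i\partial_t v+\Delta v=\partial_t f_\delta(u^\delta).
\end{equation*}
Multiplying by $\bar v$, integrating over $M$, and taking the imaginary part kills both the Laplacian term and the $\partial_t\|v\|^2$ piece on the appropriate side, leaving
\begin{equation*}
\tfrac12\tfrac{d}{dt}\|v\|_{L^2(M)}^2=\IM\int_M\partial_t f_\delta(u^\delta)\,\bar v\,dx=-\g\int_M\frac{|v|^2}{(|u^\delta|^2+\delta)^{\alpha/2}}-\frac{\alpha\,|\RE(\overline{u^\delta}v)|^2}{(|u^\delta|^2+\delta)^{\alpha/2+1}}\,dx.
\end{equation*}
To put this in the claimed form, I use the pointwise identity
\begin{equation*}
|u^\delta|^2|v|^2=|\RE(\overline{u^\delta}v)|^2+|\IM(\overline{u^\delta}v)|^2,
\end{equation*}
so that $|v|^2(|u^\delta|^2+\delta)=\delta|v|^2+|\RE(\overline{u^\delta}v)|^2+|\IM(\overline{u^\delta}v)|^2$. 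Dividing by $(|u^\delta|^2+\delta)^{\alpha/2+1}$ and subtracting $\alpha|\RE(\overline{u^\delta}v)|^2$ yields exactly the integrand on the right-hand side of \eqref{eq:evoldtu}. Integrating in time from $0$ to $t$ gives the identity.

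\textbf{Justification by approximation.} The computation above requires that $v=\partial_t u^\delta$ itself lie in $H^2$ so that multiplying the differentiated equation by $\bar v$ and using the Laplacian is rigorous. To secure this, I approximate $u_0\in H^2(M)$ by a sequence $u_0^n\in H^4(M)$ converging in $H^2$. By Proposition~\ref{cauchydel}, the corresponding solutions $u^{\delta,n}$ belong to $\mathcal{C}(\R_+,H^4)\cap\mathcal{C}^1(\R_+,H^2)$, so $v_n=\partial_t u^{\delta,n}\in\mathcal{C}(\R_+,H^2)$ and the formal manipulations above are fully legitimate for $v_n$. Hence \eqref{eq:evoldtu} holds for $u^{\delta,n}$.

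\textbf{Passage to the limit.} By continuity of the flow map from $H^2$ into $\mathcal{C}(\R_+,H^2)$ (asserted in Proposition~\ref{cauchydel}), $u^{\delta,n}\to u^\delta$ in $\mathcal{C}([0,t],H^2)$ for every $t\ge 0$. Writing $v_n=i\Delta u^{\delta,n}-if_\delta(u^{\delta,n})$ and using that $f_\delta$ is globally Lipschitz on $\C$, I deduce $v_n\to v$ in $\mathcal{C}([0,t],L^2)$, so the left-hand side of \eqref{eq:evoldtu} for $u^{\delta,n}$ converges to that for $u^\delta$. For the right-hand side, the denominator is bounded below by $\delta^{\alpha/2+1}>0$, and the numerator is dominated (up to a constant) by $|v_n|^2(1+|u^{\delta,n}|^2)$; uniform $H^2$ bounds together with Sobolev embedding (using $d\le 3$) provide an $L^1_{\mathrm{loc}}$ dominant, so dominated convergence delivers the limit.

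\textbf{Main obstacle.} The algebra is forced once the formal derivation is set up, so the only genuine difficulty is the regularity gymnastics: ensuring that $\partial_t u^\delta$ has enough smoothness to be used as a test function against its own evolution equation. The density step through $H^4$ data, combined with the stability of the $\delta$-regularized flow, resolves this cleanly.
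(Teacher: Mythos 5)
Your proof follows essentially the same route as the paper: the same formal differentiation of \eqref{nlsdel} in time, the same pairing of the resulting equation with $\partial_t\overline{u^\delta}$ and taking of the imaginary part, the same pointwise decomposition of $|u^\delta|^2|\partial_t u^\delta|^2$ into real and imaginary parts of $\overline{u^\delta}\partial_t u^\delta$, and the same rigorous justification via smoother data together with the continuity of the flow map from Proposition~\ref{cauchydel}. The only cosmetic difference is in the limit passage, where your appeal to Sobolev embedding and $d\le 3$ is unnecessary (and the proposition is stated for all $d$): since the numerator of the integrand is bounded by $(|u^\delta|^2+\delta)\,|\partial_t u^\delta|^2$, the whole integrand is dominated by $\delta^{-\alpha/2}|\partial_t u^{\delta,n}|^2$, so convergence of $\partial_t u^{\delta,n}$ in $\mathcal{C}([0,t],L^2(M))$ already suffices.
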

\begin{proof}
   Since 
 $u^\delta \in \mathcal{C}(\R_+,H^2(M))$ from
   Proposition~\ref{cauchydel}, and   
\begin{equation*}
  \left\lvert \frac{u^\delta}{(|u^\delta|^2+
\delta)^{\alpha/2}} \right\rvert \le
\frac{|u^\delta|}{\delta^{\alpha/2}},
\end{equation*}
Equation~\eqref{nlsdel} implies $\d_t u^\delta \in
\mathcal{C}(\R_+,L^2(M))$, and $\d_t u^\delta$ solves
\begin{equation*}
\(i\frac{\partial }{\partial t}+\Delta \)\frac{\d u^\delta}{\d t}=
-i\g \frac{\d_t u^\delta}{ (|u^\delta|^2+
\delta)^{\alpha/2}} +i\g \frac{\alpha}{2}\frac{u^\delta}{(|u^\delta|^2+
\delta)^{\alpha/2+1}} \d_t |u^\delta|^2.
\end{equation*}
On a formal level, we infer
\begin{align*}
  \frac{d}{dt}\|\d_t u^\delta\|_{L^2(M)}^2 &= 2\RE\int_M \frac{\d
    \overline{u^\delta}}{\d t}\frac{\d^2 u^\delta}{\d t^2}\\
&=- 2\g \int_M \frac{|\d_t u^\delta|^2}{(|u^\delta|^2+
  \delta)^{\alpha/2}} +2\alpha\g \int_M 
\frac{\( \RE (\overline{u^\delta}\d_t u^\delta)
  \)^2}{(|u^\delta|^2+ \delta)^{\alpha/2+1}}\\
&= -2\g \int_M (|u^\delta|^2+
  \delta)\frac{|\d_t u^\delta|^2}{(|u^\delta|^2+
  \delta)^{\alpha/2+1 }} +2\alpha\g \int_M 
\frac{\( \RE (\overline{u^\delta}\d_t u^\delta)
  \)^2}{(|u^\delta|^2+ \delta)^{\alpha/2+1}}. 
\end{align*}
The identity \eqref{eq:evoldtu} then follows by decomposing
\begin{equation*}
  |u^\delta|^2|\d_t u^\delta|^2 = \(\RE (\overline{u^\delta}\d_t u^\delta)
  \)^2 + \(\IM (\overline{u^\delta}\d_t u^\delta)  \)^2.
\end{equation*}
The result follows from the same arguments as in the proof of
Proposition~\ref{cauchydel}. 
\end{proof}
\begin{remark}
  Finite time stabilization must not be expected to occur in
  \eqref{nlsdel} (for $\delta>0$). The corresponding toy model is the
  ordinary differential equation
  \begin{equation*}
    \frac{d u^\delta}{dt} = - \g
    \frac{u^\delta}{\(|u^\delta|^2+\delta\)^{\alpha/2}}\quad ;\quad
  u^\delta(0)=u_0. 
  \end{equation*}
Setting again $y_\delta=|u^\delta|^2$, it now solves
\begin{equation*}
  \frac{dy_\delta}{dt} = -2\g
  \frac{y_\delta}{\(y_\delta+\delta\)^{\alpha/2}}\quad ;\quad
  y_\delta(0)=|u_0|^2.  
\end{equation*}
Now since 
\begin{equation*}
  \int_\tau^1 \frac{\(y+\delta\)^{\alpha/2}}{y}dy
\end{equation*}
diverges logarithmically as $\tau\to 0^+$, $y_\delta$ decays
exponentially in time. A change of time variable shows that there
exists $C$ independent of $\delta\in ]0,1]$ such that
\begin{equation*}
  y_\delta(t)\le C e^{-Ct/\delta^{\alpha/2}}, \quad \forall t\ge 0. 
\end{equation*}
The exponential decay is stronger and stronger as $\delta$ goes to
zero. The example discussed in the introduction shows that in the
limit $\delta \to 0$, this exponential decay becomes a finite time
arrest. Proving Theorem~\ref{theo:vanish} somehow amounts to showing the same
phenomenon in a PDE setting.  
\end{remark}

\subsection{Convergence of the approximation}
\label{sec:conv}

The fact that the approximating sequence $(u^\delta)_\delta$ converges
to a weak solution of \eqref{nls} follows essentially from the same
arguments as in \cite{GiVe85}. 
\smallbreak

A straightforward
consequence from \eqref{massdel} and \eqref{energydel} is that for 
$u_0\in H^1( M)$ fixed, the sequence $(u^\delta)_{0<\delta\le 1}$ is
uniformly bounded in $L^\infty(\R_+,H^1( M))\cap
L^{2-\alpha}(\R_+\times M)$. Since $L^\infty(\R_+,H^1( M))$ is the dual
of $L^1(\R_+,H^{-1}( M))$, we deduce the existence of
$u\in L^\infty(\R_+,H^1( M))$ and of a
subsequence $u^{\delta_n}$ such that
\begin{equation}\label{convf*}
u^{\delta_n}\rightharpoonup u,\quad  \text{in }w*\ L^\infty(\R_+,H^1( M)),
\end{equation}
with, in view of \eqref{massdel} and \eqref{energydel},
\begin{equation*}
  \|u\|_{L^\infty(\R_+,H^1( M))}\le \|u_0\|_{H^1(M)}.
\end{equation*}
Moreover,
$\frac{u^\delta}{(|u^\delta|^2+\delta)^{\alpha/2}}$ is uniformly bounded
in $L^\infty(\R_+,L^{\frac{2}{1-\alpha}}( M))$ (with $2/(1-\alpha)=\infty$
if $\alpha=1$), such that up to the
extraction of an 
other subsequence, there is $F\in
L^\infty(\R_+,L^{\frac{2}{1-\alpha}}( M))$ 
such that
\begin{equation}\label{convf*F}
\frac{u^{\delta_n}}{(|u^{\delta_n}|^2+\delta_n)^{\alpha/2}}
\rightharpoonup F,\quad \text{in } w*\ L^\infty(\R_+,L^{\frac{2}{1-\alpha}}( M)).
\end{equation}
Moreover, $\|F\|_{L^\infty(\R_+,L^{\frac{2}{1-\alpha}}( M))}\le \|u_0\|_{L^2(M)}^{1-\alpha}$.
Let $\theta\in \mathcal{C}_c^\infty(\R_+^*\times M)$. Then
\begin{align*}
\<-i\g\frac{u^{\delta_n}}{(|u^{\delta_n}|^2+\delta_n)^{\alpha/2}},
  \theta\>&=\<i\frac{\partial 
  u^{\delta_n}}{\partial t}+\Delta
u^{\delta_n},\theta\>=\<u^{\delta_n},-i\frac{\partial
  \theta}{\partial t}+\Delta \theta\>\\
&\Tend n \infty \<u,-i\frac{\partial
  \theta}{\partial t}+\Delta \theta\>=\<i\frac{\partial
  u}{\partial t}+\Delta u,\theta\>,
\end{align*}
where $\<\cdot,\cdot\>$ stands for the distribution bracket on
$\R_+^*\times M$. Thus, we deduce 
$$i\frac{\partial u}{\partial t}+\Delta u=-i\g F,\quad \text{ in }
\mathcal{D}'(\R_+^*\times M).$$
We next show that $F= u/|u|^\alpha$ where the right hand side is well
defined, that is if $\alpha<1$, or $\alpha=1$ and $u\neq 0$. We first
suppose that $u_0\in 
H^s( M)$ with $s$ large. Let us fix $t'\in \R_+$ and $\delta>0$. Thanks to
\eqref{massdel}, we infer, for any $t\in \R_+$,
\begin{align}
\frac{d}{dt}\|u^\delta(t)-u^\delta(t')\|_{L^2}^2 & \le 
\frac{d}{dt}\big(-2\RE
\left(u^\delta(t)|u^\delta(t')\right)\big)\nonumber\\
&=-2\RE\left(i\Delta u^\delta(t)-\frac{\g
    u^\delta(t)}{(|u^\delta(t)|^2+\delta)^{\alpha/2}}\Big|u^\delta(t')\right), 
\end{align}
where $(\cdot|\cdot)$ denotes the scalar product in $L^2(M)$. By
integration, we deduce
\begin{equation}\label{cont}
  \begin{aligned}
   \|u^\delta(t)-u^\delta(t')\|_{L^2( M)}^2\le 2|t-t'|\Big(& \|\Delta
u^\delta\|_{L^\infty(\R_+,H^{-1}( M))}
\|u^\delta\|_{L^\infty(\R_+,H^{1}( M))}\\
&+
\g\|u^\delta\|^{2-\alpha}_{L^\infty(\R_+,L^{2-\alpha}( M))}\Big).
  \end{aligned}
\end{equation}
From the continuity of the flow map $H^1\ni u_0\mapsto u^\delta\in
\mathcal{C}(\R_+,H^1)$ in Proposition~\ref{cauchydel}, we deduce that
\eqref{cont} also holds if we only have $u_0\in H^1( M)$. Next,
since $(u^\delta)_{0<\delta\le 1}$ is uniformly bounded in
$L^\infty(\R_+,H^1( M))$ and $ M$ is compact, 
\eqref{cont} gives the existence of a positive constant $C$ such that
for every $t,t'\in \R_+$,
$$\|u^\delta(t)-u^\delta(t')\|_{L^2( M)}\le C|t-t'|^{1/2}.$$
In particular, for any $T>0$, $(u^\delta)_{0<\delta\le 1}$ is a
bounded sequence in $\mathcal{C}([0,T],L^2( M))$ which is uniformly
equicontinuous from $[0,T]$ to $L^2( M)$. Moreover, the compactness of
the embedding $H^1(M)\subset L^2(M)$ ensures that for every $t\in
[0,T]$, the set $\{u^\delta(t)|\delta\in (0,1]\}$ is relatively compact in $L^2(M)$. As a result, Arzel\`a--Ascoli
Theorem ensures that $(u^{\delta_n})_n$ is relatively compact in
$\mathcal{C}([0,T],L^2( M))$. On the other hand, we already know
from \eqref{convf*} that 
$$u^{\delta_n}\rightharpoonup u\quad \text{in } w*\ L^\infty(\R_+,L^2( M)).$$
Therefore, we infer that $u$ is the unique accumulation point of the sequence
$(u^{\delta_n})_n$ in $\mathcal{C}([0,T], L^2( M))$. Thus
$$u^{\delta_n}\to u\quad \text{in }\mathcal{C}([0,T],L^2( M)),$$
which implies in particular $u\in \mathcal{C}([0,T],L^2( M))$
as well as $u(0)=u^{\delta_n}(0)=u_0$. This is true for any $T>0$,
therefore 
$$u\in \mathcal{C}(\R_+,L^2( M)).$$
Finally, up to the extraction of an other subsequence, 
$u^{\delta_n}(t,x)\to u(t,x)$ for almost every $(t,x)\in
\R_+\times M$. Therefore, for almost every $(t,x)\in
\R_+\times M$ such that $u(t,x)\neq 0$, we have
$$\frac{u^{\delta_n}}{(|u^{\delta_n}|^2+\delta_n)^{\alpha/2}}(t,x)\to
\frac{u}{|u|^\alpha}(t,x).$$
By comparison with \eqref{convf*F}, we deduce that up to a change of
$F$ on a set with zero measure,
$$F(t,x)=\frac{u}{|u|^\alpha}(t,x)\quad\text{(only if }u(t,x)\neq
0\text{ in the case }\alpha=1\text{)},$$ 
which completes the proof of the existence part 
of Theorem~\ref{theo:cauchy}.

\subsection{Proof of Theorem~\ref{theo:persist}}
\label{sec:persist}

To prove Theorem~\ref{theo:persist}, we resume the idea due to T.~Kato
\cite{Kato87} (see also \cite{CazCourant}), based on the
general idea for 
Schr\"odinger equation, that 
two space derivative cost the same as one time derivative. 
\smallbreak

The time derivative of $u^\delta$ at time $t=0$ is given by the
equation: from \eqref{nlsdel}, 
\begin{equation*}
  \frac{\d u^\delta}{\d t}(0) = i\Delta u^\delta -
  if_\delta\(u^\delta\)\Big|_{t=0} = i\Delta u_0 -\g
  \frac{u_0}{\(|u_0|^2+\delta\)^{\alpha/2}}. 
\end{equation*}
We infer, for $u_0\in H^2(M)$,
\begin{align*}
  \left\lVert \frac{\d u^\delta}{\d t}(0)\right\rVert_{L^2(M)} &\le
  \|\Delta u_0\|_{L^2(M)} + \g \left\lVert \lvert
      u_0\rvert^{1-\alpha}\right\rVert_{L^2(M)} \\
&\le \|\Delta
    u_0\|_{L^2(M)} + C(\alpha,M)\|u_0\|_{L^2(M)}^{1-\alpha},
\end{align*}
since $M$ is compact. By Proposition~\ref{prop:cauchydel2}, the
$L^2$-norm of $\d_t u^\delta$ is a non-increasing function of time,
and there exists $C$ such that 
\begin{equation*}
  \left\lVert \frac{\d u^\delta}{\d t}(t)\right\rVert_{L^2(M)}\le
  C,\quad \forall t\in \R_+,\ \forall \delta \in ]0,1]. 
\end{equation*}
Using \eqref{nlsdel} again, we infer
\begin{equation*}
  \|\Delta u^\delta(t)\|_{L^2(M)}\le C + \g \left\lVert \lvert
      u(t)\rvert^{1-\alpha}\right\rVert_{L^2(M)}\le \widetilde C,
\end{equation*}
where $\widetilde C$ is independent of $t\in \R_+$ and
$\delta\in]0,1]$, since $M$ is compact and since \eqref{massdel} implies
\begin{equation*}
  \left\lVert u^\delta(t)\right\rVert_{L^2(M)}\le 
\left\lVert u_0\right\rVert_{L^2(M)}
  ,\quad \forall t\in \R_+,\ \forall \delta \in ]0,1]. 
\end{equation*}
Therefore, there exists $C$ depending only on $\|u_0\|_{H^2(M)}$, $M$
and $\g$ such that 
\begin{equation*}
   \|\Delta u^\delta\|_{L^\infty(\R_+,L^2(M))}\le C.
\end{equation*}
By Fatou's Lemma, we conclude that if $u_0\in H^2(M)$, then the weak
solution $u$ satisfies $u\in L^\infty(\R_+,H^2(M))$.

\section{Uniqueness}
\label{sec:unique}
We start the proof of uniqueness in Theorem~\ref{theo:cauchy} with
the following lemma.

\begin{lemma}\label{lem:young}
  Let $\alpha\in ]0,1]$. For all $z_1,z_2\in \C$,
  \begin{equation*}
    \RE \( \(\frac{z_1}{|z_1|^{\alpha}}-
    \frac{z_2}{|z_2|^{\alpha}}\)\(\overline{z_1-z_2}\)\)\ge 0.
  \end{equation*}
\end{lemma}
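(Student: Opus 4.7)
The plan is to expand the real part and reduce the inequality to one involving only the moduli $|z_1|$ and $|z_2|$, which will then factor as a monotonicity statement.

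First, I would write $a = |z_1|$, $b = |z_2|$ and expand:
\begin{equation*}
\RE\!\left(\!\left(\frac{z_1}{|z_1|^\alpha}-\frac{z_2}{|z_2|^\alpha}\right)\overline{(z_1-z_2)}\right)
= a^{2-\alpha} + b^{2-\alpha} - \left(\frac{1}{a^\alpha}+\frac{1}{b^\alpha}\right)\RE(z_1\overline{z_2}).
\end{equation*}
(The degenerate cases $z_1=0$ or $z_2=0$ are handled separately, using the natural convention $z/|z|^\alpha = 0$ at $z=0$ which is consistent with the nonlinearity in \eqref{nls}; then one side of the expression vanishes and the remaining term is manifestly nonnegative.)

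Since $\RE(z_1\overline{z_2}) \le |z_1||z_2| = ab$ and the prefactor $\frac{1}{a^\alpha}+\frac{1}{b^\alpha}$ is nonnegative, it suffices to prove the purely real inequality
\begin{equation*}
a^{2-\alpha} + b^{2-\alpha} \;\ge\; a^{1-\alpha} b + a b^{1-\alpha}.
\end{equation*}
Grouping terms, this is exactly
\begin{equation*}
\bigl(a^{1-\alpha}-b^{1-\alpha}\bigr)(a-b) \;\ge\; 0,
\end{equation*}
which holds because the map $t \mapsto t^{1-\alpha}$ is nondecreasing on $[0,\infty)$ (we are using $1-\alpha \ge 0$, i.e.\ $\alpha \le 1$): the two factors have the same sign.

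There is no real obstacle here; the computation is short and the only subtlety is remembering to bound $\RE(z_1\overline{z_2})$ by $|z_1||z_2|$ before trying to factor, since a direct attempt to factor the unexpanded expression over $\C$ is awkward. The role of the hypothesis $\alpha\le 1$ is isolated in the final monotonicity step, which is consistent with its use elsewhere in the paper (for $\alpha>1$ the function $t\mapsto t^{1-\alpha}$ is decreasing and the inequality reverses).
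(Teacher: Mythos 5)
Your proof is correct. The reduction is the same as the paper's: after writing $a=|z_1|$, $b=|z_2|$ and bounding the cross term $\RE(z_1\overline{z_2})$ by $ab$ (the paper does this by passing to polar coordinates and bounding $\cos(\theta_1-\theta_2)$ by $1$), both arguments arrive at the scalar inequality $a^{2-\alpha}+b^{2-\alpha}\ge a^{1-\alpha}b+ab^{1-\alpha}$. Where you differ is in how you finish: the paper applies Young's inequality twice, with exponents $p=\frac{2-\alpha}{1-\alpha}$ and $p=2-\alpha$, and has to treat $\alpha=1$ as a separate (trivial) case since those exponents degenerate; you instead factor the difference as $\bigl(a^{1-\alpha}-b^{1-\alpha}\bigr)(a-b)\ge 0$ and invoke monotonicity of $t\mapsto t^{1-\alpha}$. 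Your factorization is more elementary, handles $\alpha=1$ uniformly, and makes transparent exactly where $\alpha\le 1$ is used; the paper's Young-inequality route is what gives the lemma its label but buys nothing extra here. Your explicit treatment of the degenerate cases $z_1=0$ or $z_2=0$ (with the convention $z/|z|^\alpha=0$ at $z=0$) is also a point the paper glosses over.
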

\begin{proof}
  Pick $\rho_1,\rho_2\ge 0$ and $\theta_1,\theta_2\in [0,2\pi[$ such
  that
  \begin{equation*}
    z_j=\rho_je^{i\theta_j},\quad j=1,2.
  \end{equation*}
We write
\begin{align*}
 \RE \( \(\frac{z_1}{|z_1|^{\alpha}}-
    \frac{z_2}{|z_2|^{\alpha}}\)\(\overline{z_1-z_2}\)\)&=
    \rho_1^{2-\alpha} +\rho_2^{2-\alpha} -\rho_1^{1-\alpha} \rho_2
    \cos\(\theta_1-\theta_2\)\\
&\quad -\rho_1 \rho_2^{1-\alpha}
    \cos\(\theta_1-\theta_2\) \\
&\ge \rho_1^{2-\alpha} +\rho_2^{2-\alpha} -\rho_1^{1-\alpha} \rho_2
    -\rho_1 \rho_2^{1-\alpha}. 
\end{align*}
For $\alpha=1$, the conclusion is then obvious. For $\alpha\in ]0,1[$,
we use Young's inequality:
\begin{equation*}
  ab\le \frac{a^p}{p}+\frac{b^{p'}}{p'},\quad \forall a,b\ge 0, \
  \forall p\in ]1,\infty[.
\end{equation*}
With $p=\frac{2-\alpha}{1-\alpha}$ and $p=2-\alpha$, respectively, we infer
\begin{equation*}
  \rho_1^{1-\alpha} \rho_2 \le
  \(\frac{1-\alpha}{2-\alpha}\)\rho_1^{2-\alpha} +
  \frac{\rho_2^{2-\alpha}}{2-\alpha} \quad ;\quad \rho_1 \rho_2^{1-\alpha} \le
    \frac{\rho_1^{2-\alpha}}{2-\alpha}+
\(\frac{1-\alpha}{2-\alpha}\)\rho_2^{2-\alpha} .
\end{equation*}
The lemma follows. 
\end{proof}

Next, we prove the following energy estimate, which is shown to hold
for any solution to (\ref{nls}).

\begin{proposition}\label{prop:mass}
Let $d\ge 1$. Let $u_0,v_0\in H^1(M)$ and $u,v\in
\mathcal{C}(\R_+,L^2( M))\cap L^\infty(\R_+,H^1( M))$ be two solutions
of \eqref{nls} with initial data $u(0)=u_0$ and $v(0)=v_0$
respectively. Then the map $m_{u,v}:t\mapsto \|(u-v)(t)\|_{L^2(M)}^2$ is
differentiable everywhere on $\R_+$,
$m_{u,v}'\in L^1_{\rm loc}(\R_+)$ and for
every $t\in\R_+$,
\begin{equation}\label{eq:massuv}
\frac{d}{dt}\|(u-v)(t)\|_{L^2( M)}^2+2\g\int_M\RE\left(\left(\frac{u(t)}{|u(t)|^{\alpha}}-\frac{v(t)}{|v(t)|^{\alpha}}\right) \overline{u(t)-v(t)}\right)dx=0
\end{equation}
In particular, if $v$ is taken to be the trivial solution $v\equiv 0$,
we have for any solution of \eqref{nls}:
 \begin{equation}\label{eq:mass}
\frac{d}{dt}\|u(t)\|_{L^2( M)}^2+2\g\|u(t)\|^{2-\alpha}_{L^{2-\alpha}(
  M)}=0. 
\end{equation} 
\end{proposition}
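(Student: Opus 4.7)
The plan is to test the equation satisfied by $w := u - v$ against $\overline{w}$ via the $H^1/H^{-1}$ duality. First, subtracting the two copies of \eqref{nls} in the sense of distributions,
\begin{equation*}
  i\partial_t w + \Delta w = -i\g(F_u - F_v),
\end{equation*}
where $F_u, F_v$ are the nonlinear terms from the weak-solution definition ($F_u=u/|u|^\alpha$ when $\alpha<1$; $F_u$ bounded by $1$ with $F_u = u/|u|$ on $\{u\neq 0\}$ when $\alpha=1$, and similarly for $F_v$). Since $|F_u|\le |u|^{1-\alpha}$ and $|F_v|\le |v|^{1-\alpha}$, and $M$ is compact, one has $F_u, F_v \in L^\infty(\R_+,L^{2/(1-\alpha)}(M))\hookrightarrow L^\infty(\R_+,L^2(M))$, so that $\partial_t w \in L^\infty(\R_+,H^{-1}(M))$ while $w \in L^\infty(\R_+,H^1(M)) \cap \mathcal{C}(\R_+,L^2(M))$.

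I would then invoke the classical Lions--Magenes chain rule for such a pair: $m_{u,v}(t)=\|w(t)\|_{L^2}^2$ is absolutely continuous with
\begin{equation*}
  m_{u,v}'(t) = 2\,\RE \<\partial_t w(t), w(t)\>_{H^{-1},H^1}
\end{equation*}
for almost every $t$ (proved by mollifying $w$ in time and passing to the limit). Substituting the PDE and using that $\<i\Delta w, w\> = -i\|\nabla w\|_{L^2}^2$ is purely imaginary, the Laplacian term drops out and one is left with
\begin{equation*}
  m_{u,v}'(t) = -2\g\,\RE\int_M (F_u - F_v)\,\overline{w}\,dx,
\end{equation*}
which is \eqref{eq:massuv}. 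Taking $v \equiv 0$ and using $\RE(F_u \overline{u})=|u|^{2-\alpha}$ pointwise (valid in both regimes, since $F_u\overline u=0$ when $u=0$) immediately yields \eqref{eq:mass}.

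To upgrade this to an identity holding for every $t$ (with $m_{u,v}\in \mathcal{C}^1(\R_+)$), I would verify that the right-hand side depends continuously on $t$. For $\alpha<1$, the Nemytskii operator $z \mapsto z/|z|^\alpha$ is continuous from $L^2(M)$ into $L^{2/(1-\alpha)}(M)$, by an a.e.-convergence and uniform-integrability argument that uses compactness of $M$ (the bound $|z/|z|^\alpha|^{2/(1-\alpha)} = |z|^2$ gives uniform integrability directly); combined with $u,v \in \mathcal{C}(\R_+,L^2)$ this makes the integrand continuous in $t$ with values in $L^1(M)$. For $\alpha=1$, I would rewrite
\begin{equation*}
  \RE\bigl((F_u - F_v)\overline{(u-v)}\bigr) = |u| + |v| - \RE(F_u \overline{v}) - \RE(F_v \overline{u}),
\end{equation*}
each summand being continuous in $t$ into $L^1(M)$ via $u,v \in \mathcal{C}(\R_+,L^1)$ and the uniform bound $|F_u|,|F_v|\le 1$. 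The fundamental theorem of calculus then promotes $m_{u,v}$ to $\mathcal{C}^1(\R_+)$ and extends \eqref{eq:massuv} to every $t\ge 0$.

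The main obstacle is the rigorous justification of the chain rule: the damping nonlinearity is not differentiable (and, for $\alpha=1$, not even defined on $\{u=0\}$), so one cannot simply multiply \eqref{nls} by $\overline{w}$ and manipulate pointwise. The argument must go through the $H^1/H^{-1}$ duality pairing and a time-mollification of $w$. Everything else---the cancellation of the Laplacian contribution via $\RE(i\cdot\text{real})=0$, and the $t$-continuity of the damping integral---is comparatively routine once the pairing is in place.
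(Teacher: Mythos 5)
Your argument is the same in essence as the paper's: subtract the two equations, pair $\partial_t w$ with $w$ in the $H^{-1}/H^1$ duality, use that the Laplacian contribution $\<i\Delta w,\overline w\>=-i\|\nabla w\|_{L^2}^2$ has zero real part, and read off the damping term; the case $v\equiv 0$ then gives \eqref{eq:mass} via $\RE(F_u\overline u)=|u|^{2-\alpha}$. Two points deserve attention, though. First, before the pairing can be performed at a \emph{given} time $t$ you need $w(t)\in H^1(M)$ and $\partial_t w(t)\in H^{-1}(M)$ for that $t$, and membership in $L^\infty(\R_+,H^1(M))$ only gives this for a.e.\ $t$; the paper devotes the first half of its proof to showing $u\in{\mathcal C}_w(\R_+,H^1(M))$ (weak $H^1$-continuity, deduced from the $L^\infty_tH^1_x$ bound and continuity in $L^2$), precisely so that the derivative of $m_{u,v}$ exists at \emph{every} $t$ rather than almost every $t$. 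Your proposal skips this and instead tries to upgrade an a.e.\ identity to an everywhere one by continuity of the right-hand side. Second, that upgrade is sound for $\alpha<1$ and for the special case $v\equiv 0$, but it breaks down for $\alpha=1$ in the general identity \eqref{eq:massuv}: on the set $\{u(t)=0\}$ the function $F_u(t)$ is only constrained by $|F_u|\le 1$ and is in no way canonically determined, so the cross term $t\mapsto\int_M\RE\bigl(F_u(t)\overline{v(t)}\bigr)$ need not depend continuously on $t$, and $m_{u,v}$ need not be ${\mathcal C}^1$ (nor does the proposition claim it is). So to obtain the statement as written for $\alpha=1$ you should replace the continuity-of-the-integrand step by the paper's weak-continuity argument, which yields everywhere-differentiability directly. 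None of this affects the downstream use of the proposition, since the monotonicity of $m_{u,v}$ via Lemma~\ref{lem:young} only requires the a.e.\ identity together with absolute continuity.
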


\begin{proof}
First, notice that if $u\in \mathcal{C}(\R_+,L^2( M))\cap
L^\infty(\R_+,H^1( M))$, then for every $t\in \R_+$, $u(t)\in
H^1(M)$. This is so because  $u$ is weakly continuous in time, with
values in $H^1(M)$: $u\in {\mathcal
    C}_w(\R_+,H^1(M))$. 
Indeed, if $t\ge 0$ is fixed, since $u\in
L^\infty(\R_+,H^1(M))$, there exists a sequence $t_n\to t$ such that
for every $n$, $\|u(t_n)\|_{H^1(M)}\le \|u\|_{L^\infty(\R_+,H^1(M))}$. Then, for
every $\phi\in H^1(M)$ and $j\in\{1,\cdots,d\}$, 
\begin{align}
\left<\d_j u(t),\phi\right>_{H^{-1},H^1} & =  -\left<u(t),\d_j
\phi\right>_{L^2,L^2}=-\underset{n\to\infty}{\lim}\left<u(t_n),\d_j
\phi\right>_{L^2,L^2}\nonumber\\
& = \underset{n\to\infty}{\lim}\left<\d_j u(t_n),\phi\right>_{L^2,L^2},\nonumber
\end{align}
thus
\begin{equation*}
\left|\left<\d_j u(t),\phi\right>_{H^{-1},H^1}\right|
\le\|u\|_{L^\infty(\R_+,H^1(M))}\|\phi\|_{L^2(M)},
\end{equation*}
which implies that $\nabla u(t)\in L^2(M)^d$. As a result, for every
$t\in \R_+$,
\begin{equation*}
\frac{\d u}{\d t}(t)=i\Delta u(t)-\g\frac{u(t)}{|u(t)|^\alpha}\in H^{-1}(M). 
\end{equation*}
Then, if $u,v$ are as in the statement of Proposition~\ref{prop:mass},
$m_{u,v}$ is 
differentiable everywhere on $\R_+$, and
\begin{align}
m_{u,v}'(t) & =  2\RE\left<\frac{\partial (u-v)}{\partial
  t}(t),\overline{(u-v)(t)}\right>_{H^{-1}( M),H^1( M)}\nonumber\\ 
& = 
2\RE\left<i\Delta(u-v)(t)-\g\left(\frac{u(t)}{|u(t)|^\alpha}-\frac{v(t)}{|v(t)|^\alpha}\right),\overline{(u-v)(t)}\right>_{H^{-1}(M),H^1(M)}\nonumber\\  
&=
-2\g\int_M\RE\left(\left(\frac{u(t)}{|u(t)|^\alpha}-\frac{v(t)}{|v(t)|^\alpha}\right)\overline{(u-v)(t)}\right)dx.
\end{align}
Since $u,v\in L^\infty(\R_+,L^2(M))$ and $L^2(M)\subset
L^{2-\alpha}(M)$ by compactness of $M$, we deduce from the
Cauchy-Schwarz inequality that $m_{u,v}'\in L^\infty(\R_+)\subset
L^1_{\rm loc}(\R_+)$. 
\end{proof}
It follows from Proposition~\ref{prop:mass},
Lemma~\ref{lem:young} and the Fundamental Theorem of
Calculus, that if $u$ and $v$ are chosen as in
Proposition~\ref{prop:mass}, $m_{u,v}(t)=\|(u-v)(t)\|_{L^2(M)}^2$ is
non-increasing on $\R_+$. The uniqueness part of Theorem~\ref{theo:cauchy}
follows, choosing two solutions $u$ and $v$ of (\ref{nls}) with the
same initial datum $u(0)=v(0)$.

\section{Finite time stabilization: proof of Theorem~\ref{theo:vanish}}
\label{sec:dissip}

We next show that under the assumptions of Theorem~\ref{theo:vanish},
$u$ vanishes 
in finite time. The proof relies on a Nash type inequality:
\begin{lemma}\label{lem:nash}
  Let $(M,g)$ be a smooth compact Riemannian manifold, of dimension
  $d$, and $\alpha\in ]0,1]$. There exists $C>0$ such that
 \begin{align}
    \|f\|_{L^2(M)}^{\alpha d + 4-2\alpha}&\le C
    \(\|f\|^{2-\alpha}_{L^{2-\alpha}(M)}\)^{2}\|f\|_{H^1(M)}^{\alpha
      d},\quad \forall f\in 
    H^1(M).\label{eq:nash1}\\ 
\|f\|_{L^2(M)}^{\alpha d + 8-4\alpha }&\le C
    \(\|f\|^{2-\alpha}_{L^{2-\alpha}(M)}\)^{4}\|f\|_{H^2(M)}^{\alpha
      d},\quad \forall f\in
    H^2(M).\label{eq:nash2}
  \end{align} 
\end{lemma}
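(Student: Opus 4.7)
Both displayed inequalities are instances of the Gagliardo--Nirenberg interpolation inequality on the compact manifold $M$; the exponents in \eqref{eq:nash1}--\eqref{eq:nash2} are exactly those forced by scaling invariance in the Euclidean model.

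The plan for \eqref{eq:nash1} when $d\ge 3$ is to combine the H\"older (log-convexity) interpolation
\begin{equation*}
\|f\|_{L^2(M)} \le \|f\|_{L^{2-\alpha}(M)}^{1-\theta}\|f\|_{L^q(M)}^{\theta},\qquad
\frac{1}{2}=\frac{1-\theta}{2-\alpha}+\frac{\theta}{q},
\end{equation*}
with the Sobolev embedding $H^1(M)\hookrightarrow L^q(M)$. Taking $q$ to be the Sobolev conjugate $q=2d/(d-2)$, a direct computation gives $\theta=\frac{d\alpha}{d\alpha+4-2\alpha}$, and raising the resulting estimate to the power $d\alpha+4-2\alpha$ produces \eqref{eq:nash1}. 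The proof of \eqref{eq:nash2} for $d\ge 5$ is identical, with $H^2$ replacing $H^1$ and Sobolev conjugate $q'=2d/(d-4)$, yielding $\theta=\frac{d\alpha}{d\alpha+8-4\alpha}$ and raising to the power $d\alpha+8-4\alpha$.

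The main technical obstacle lies in the low-dimensional cases, where the pure interpolation+Sobolev argument above gives strictly weaker exponents than those claimed: $d\le 2$ for \eqref{eq:nash1}, and $d\le 4$ for \eqref{eq:nash2}. These are handled by refining the Sobolev step with a sharp Gagliardo--Nirenberg bound and then absorbing the resulting $\|f\|_{L^2}$ factors on the left. For instance, for \eqref{eq:nash1} with $d=1$, the fundamental theorem of calculus yields $\|f\|_{L^\infty(\R)}\le \sqrt{2}\|f\|_{L^2}^{1/2}\|\nabla f\|_{L^2}^{1/2}$; plugging this into the H\"older interpolation $\|f\|_{L^2}^2\le \|f\|_{L^{2-\alpha}}^{2-\alpha}\|f\|_{L^\infty}^\alpha$, absorbing the $\|f\|_{L^2}^{\alpha/2}$ factor on the left, and squaring gives \eqref{eq:nash1}. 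The case $d=2$ uses instead the Euclidean bound $\|f\|_{L^q(\R^2)}\le C\|f\|_{L^2}^{2/q}\|\nabla f\|_{L^2}^{1-2/q}$ for any finite $q>2$; an analogous substitute-and-absorb step shows that the final exponents do not depend on the choice of $q$. Similar refinements dispose of the low-dimensional cases of \eqref{eq:nash2}. The passage from the Euclidean $\R^d$-versions to the compact manifold $M$ is effected by a smooth partition of unity $\sum_j\chi_j\equiv 1$ subordinate to a finite coordinate cover, applying the Euclidean inequality to each $\chi_j f$ read in the chart, and controlling the error coming from $[\chi_j,\nabla]$ by $\|f\|_{H^s(M)}$, which is precisely why the right-hand side of the lemma involves the full $H^s(M)$ norm rather than $\|\nabla^s f\|_{L^2}$.
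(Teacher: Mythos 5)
Your proposal is correct, and the exponents all check out (e.g.\ for \eqref{eq:nash1} with $d\ge 3$ one indeed gets $\theta=\alpha d/(\alpha d+4-2\alpha)$, hence $L^{2-\alpha}$-exponent $(1-\theta)(\alpha d+4-2\alpha)=2(2-\alpha)$ and $H^1$-exponent $\alpha d$; the $d=1$ absorption argument and the $q$-independence in the $d=2$ case also verify). But it takes a genuinely different route from the paper. The paper proves a single generalized inequality
$\|g\|_{L^2(\R^d)}^{\alpha d+2s(2-\alpha)}\le C\,\|g\|_{L^{2-\alpha}(\R^d)}^{2s(2-\alpha)}\|g\|_{\dot H^s(\R^d)}^{\alpha d}$
on the Fourier side: split frequencies at $|\xi|=R$, bound the low-frequency piece by $R^{d/q}\|\widehat g\|_{L^p}$ with $p'=2-\alpha$ via H\"older, convert to $\|g\|_{L^{2-\alpha}}$ by Hausdorff--Young, bound the high-frequency piece by $R^{-s}\|g\|_{\dot H^s}$, and optimize in $R$. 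This one computation covers all $d$, $s$ and $\alpha$ simultaneously, with no case distinction; \eqref{eq:nash1} and \eqref{eq:nash2} are the instances $s=1,2$. Your physical-space argument (Lebesgue interpolation plus Sobolev embedding, refined by sharp Gagliardo--Nirenberg and an absorption of the residual $\|f\|_{L^2}$ power in the subcritical dimensions) is more elementary in that it avoids Fourier analysis, but it pays for this with a dimension-by-dimension case analysis --- and note that the ``low-dimensional'' cases you flag as the technical obstacle ($d\le 2$ for \eqref{eq:nash1}, $d\le 4$ for \eqref{eq:nash2}) are precisely the ones the paper actually uses ($d=1$ for \eqref{eq:nash1}, $d=2,3$ for \eqref{eq:nash2}), so in your approach the ``exceptional'' branch carries all the weight. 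Both proofs localize to $\R^d$ by a partition of unity in essentially the same way; your closing remark about why the full $H^s(M)$ norm (rather than a homogeneous seminorm) appears on the right-hand side is a fair observation that the paper leaves implicit.
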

\begin{proof}
 As it is
standard in geometry, inequalities valid on $\R^d$ are
easily transported to the case of compact manifolds (see
e.g. \cite{He99}).  Since $M$ is compact, $M$ can be covered by a
finite number of 
  charts
  \begin{equation*}
    (\Om_n,\varphi_n)_{1\le n\le N}
  \end{equation*}
such that for any $n$, the components $g_{ij}^n$ of $g$ in
$(\Om_n,\varphi_n)$ satisfy 
\begin{equation*}
  \frac{1}{2}\delta_{ij}\le g_{ij}^n \le 2\delta_{ij}
\end{equation*}
as bilinear forms. Let $(\eta_n)_{1\le n\le N}$ be a smooth partition
of unity subordinate to the covering $(\Om_n)_{1\le n\le N}$. For any
$f\in C^\infty(M)$ and any $n$, we have
\begin{align*}
  \int_M |\eta_n f|^p &\le 2^{d/2} \int_{\R^d}\left\lvert \(\eta_n
    f\)\circ \varphi_n^{-1}(x)\right\rvert^p dx,\quad 1\le p\le 2,\\
\int_M \left\lvert \nabla\(\eta_n f\)\right\rvert^2 &\ge 2^{-d/2}
\int_{\R^d}\left\lvert \nabla\(\(\eta_n 
    f\)\circ \varphi_n^{-1}\)(x)\right\rvert^2 dx,\\
\int_M \left\lvert \Delta\(\eta_n f\)\right\rvert^2 &\ge 2^{-d/2}
\int_{\R^d}\left\lvert \Delta\(\(\eta_n 
    f\)\circ \varphi_n^{-1}\)(x)\right\rvert^2 dx.
\end{align*}
The lemma follows from inequalities on $\R^d$, adapted from the
Nash inequality \cite{Na58}: for all $\alpha\in ]0,1]$ and all $s>0$,
there exists $C=C(\alpha,s)$ such that 
 \begin{equation}\label{eq:nashgen}
    \left\lVert g\right\rVert_{L^2(\R^d)}^{\alpha d + 2s(2-\alpha)}\le C
    \(\|g\|^{2-\alpha}_{L^{2-\alpha}(\R^d)}\)^{2s}\|g\|_{\dot H^s(\R^d)}^{\alpha
      d}, \quad \forall g\in H^s(\R^d)\cap L^{2-\alpha}(\R^d),
  \end{equation} 
where $\dot H^s(\R^d)$ denotes the homogeneous Sobolev space. Note
that for $s=1$ and $s=2$, we recover the numerology of
\eqref{eq:nash1} and \eqref{eq:nash2}, respectively. To prove
\eqref{eq:nashgen}, use Plancherel formula and decompose the frequency
space: for $R>0$, write
\begin{equation*}
  \|g\|_{L^2(\R^d)}\lesssim \|\widehat g\|_{L^2(|\xi| \le R)} +
  \|\widehat g\|_{L^2(|\xi|> R)} \lesssim R^{d/q}\|\widehat
  g\|_{L^p(\R^d)} + R^{-s}\left\lVert \lvert \xi\rvert^s \widehat
    g\right\rVert_{L^2(\R^d)}, 
\end{equation*}
where $1/2=1/q+1/p$. Choose $p$ so that its H\"older conjugate
exponent is $p'= 2-\alpha\in [1,2[$. Hausdorff--Young inequality
implies
\begin{equation*}
\|g\|_{L^2(\R^d)}\lesssim  R^{d/q} \|g\|_{L^{2-\alpha}(\R^d)}
+R^{-s}\|g\|_{\dot H^s(\R^d)}.  
\end{equation*}
We compute $q=2(2-\alpha)/\alpha$. Optimizing in $R$ yields
\begin{equation*}
  R^{s+ (\alpha d)/(2(2-\alpha))} = \frac{\|g\|_{\dot
      H^s(\R^d)}}{\|g\|_{L^{2-\alpha}(\R^d)}},
\end{equation*}
where we point out that getting the best possible constant is not our
goal. This value of $R$ yields \eqref{eq:nashgen}. 
Lemma~\ref{lem:nash} then follows by
using the chain rule, the fact that $\eta_n$ is smooth on $M$, and
summing over $n$. 
\end{proof}

To prove finite time extinction, we treat separately the cases $d=1$
on the one hand, and $d=2,3$ on the 
other hand. 
In the one-dimensional case, the identity \eqref{eq:mass}
and Nash inequality \eqref{eq:nash1} yield
\begin{equation}\label{eq:massineq1}
 \frac{d}{dt}\|u(t)\|_{L^2( M)}^2 +
\frac{C\g}{\|u(t)\|_{H^1(M)}^{\alpha/2}} 
\|u(t)\|_{L^2(M)}^{2-\alpha/2}\le 0,  
\end{equation}
for some $C>0$ independent of $t$, $\g$ and $u$. 
From Theorem~\ref{theo:cauchy}, we infer
\begin{equation*}
   \frac{d}{dt}\|u(t)\|_{L^2( M)}^2 +
\frac{C\g}{\|u_0\|_{H^1(M)}^{\alpha/2}} 
\|u(t)\|_{L^2(M)}^{2-\alpha/2}\le 0. 
\end{equation*}
By integration, we deduce, as long as $\|u(t)\|_{L^2(M)}$ is not equal
  to zero: 
  \begin{equation*}
    \|u(t)\|_{L^2( M)} \le \(\|u_0\|_{L^2(M)}^{\alpha/2}-
    \frac{C\g}{\|u_0\|_{H^1(M)}^{\alpha/2}}  t\)^{2/\alpha}.
  \end{equation*}
We infer that $\|u(t)\|_{L^2( M)}$ vanishes in finite time, at a time
$$T_v:=\sup\{t\in\R_+| \|u(t)\|_{L^2( M)}\not= 0\},$$
which is bounded from above by
\begin{equation*}
  T_v\le \frac{1}{C\g}\|u_0\|_{L^2(M)}^{\alpha/2}\|u_0\|_{H^1(M)}^{\alpha/2}.
\end{equation*}
Using \eqref{eq:mass} again, and the mere fact that the $L^2$-norm of
$u$ is a non-increasing function of time, we conclude that
$\|u(t)\|_{L^2(M)}=0$ for all $t>T_v$. 
\begin{remark}
  Without the information $u\in L^\infty(\R_+,H^1(M))$, we cannot
  conclude after \eqref{eq:massineq1}, in general. For instance, if we
  have $\|u(t)\|_{H^1(M)}\le C e^{Ct}$, the integration of
  \eqref{eq:massineq1} does not necessarily yield finite 
  time stabilization. 
\end{remark}
\begin{remark}
  Similarly, it might be tempting to first integrate \eqref{eq:mass}
  with respect to time, and then use the fact that $M$ is compact, to
  write
  \begin{equation*}
    \|u(t)\|_{L^{2-\alpha}(M)}^2+C\g \int_0^t
    \|u(\tau)\|_{L^{2-\alpha}(M)}^{2-\alpha}d\tau \lesssim \|u_0\|_{L^2(M)}^2.
  \end{equation*}
However, this inequality does not rule out, e.g., an
exponential decay in time. 
\end{remark}
We see that the key in the above argument is that we have controlled
the term $\|u(t)\|_{L^{2-\alpha}(M)}^{2-\alpha}$ by
$\|u(t)\|_{L^2(M)}^\beta$ for some $\beta<2$, in order to recover the
ODE mechanism presented in the introduction. With the uniform $H^1$
estimate given in Theorem~\ref{theo:cauchy}, \eqref{eq:nash1} yields
such a control provided that 
\begin{equation*}
  \alpha d +4-2\alpha < 4,\text{ that is, if
  }\alpha\(\frac{d}{2}-1\)<0. 
\end{equation*}
Since $\alpha\in ]0,1]$, this is possible if, and only if, $d=1$. 
For $d=2,3$, we therefore use \eqref{eq:nash2} and
Theorem~\ref{theo:persist}. We infer similarly
\begin{equation*}
   \frac{d}{dt}\|u(t)\|_{L^2( M)}^2 +
C\g
\|u(t)\|_{L^2(M)}^{2-(1-d/4)\alpha}\le 0. 
\end{equation*}
Again since $(1-d/4)\alpha>0$, we infer that $\|u(t)\|_{L^2( M)}$
vanishes in finite time $T_v$, with 
\begin{equation*}
  T_v \le \frac{1}{C\g}\|u_0\|_{L^2(M)}^{(1-d/4)\alpha}.
\end{equation*}
Note that unlike in the one-dimensional case, this constant $C$ depends
on $u_0$ (on $\|u_0\|_{H^2(M)}$ only), $M$ and $\g$.

\section{Finite time stabilization: proof of Corollary~\ref{cor:stabnls}}
\label{sec:casnl}

Since Corollary~\ref{cor:stabnls} includes the assumption $d=1$, we
shall be rather brief for the analogue of
Theorem~\ref{theo:cauchy}. 
We can resume the strategy presented in Section~\ref{sec:exist}, to
construct an approximating sequence solution to 
\begin{equation}\label{eq:nlsdel2}
i\frac{\partial u^\delta}{\partial t}+\Delta u^\delta=\l \lvert
u^\delta\rvert^{2\si} u^\delta-i\g\frac{u^\delta}{(|u^\delta|^2+
\delta)^{\alpha/2}}.  
\end{equation}
Since $d=1$, $H^1(M)\hookrightarrow
    L^\infty(M)$, so the extra nonlinear term $|u|^{2\si}u$ is well
    controlled in view of a limiting procedure, provided that we have
    a uniform bound for $u^\delta$ in $L^\infty(\R_+,H^1(M))$. This is
    the most important step to infer Corollary~\ref{cor:stabnls} from
    the proof of Theorem~\ref{theo:vanish}. 
\smallbreak

Again because $H^1(M)\hookrightarrow    L^\infty(M)$, the global
well-posedness of \eqref{eq:nlsdel2} for $\delta>0$ is
straightforward. The analogues of \eqref{massdel} and
\eqref{energydel} are, since $\l\in \R$:
\begin{align*}
  &\|u^\delta(t)\|_{L^2( M)}^2+2\g\int_0^t\int_{
  M}\frac{|u^\delta(\tau)|^2}{(|u^\delta(\tau)|^2+
\delta)^{\alpha/2}}dxd\tau=\|u_0\|_{L^2( M)}^2, \\
&\frac{d}{dt}\(\|\nabla u^\delta(t)\|_{L^2(M)}^2
+\frac{\l}{\si+1}\|u^\delta(t)\|_{L^{2\si+2}(M)}^{2\si+2}\)=\\
&\quad
-2\g\int_{
    M}\frac{\delta|\nabla
    u^\delta|^2+(1-\alpha)\lvert \RE(\overline{u^\delta}\nabla
    u^\delta)\rvert^2+\lvert\IM(\overline{u^\delta}\nabla
    u^\delta)\rvert^2}{(|u^\delta|^2+\delta)^{\alpha/2+1}}(t,x)dx. 
\end{align*}
They are obtained by the same procedure as in the proof of
Proposition~\ref{cauchydel}: formally, multiply \eqref{eq:nlsdel2} by
$\overline{u^\delta}$, integrate over $M$, and take the imaginary part
to get the first evolution law; multiply \eqref{eq:nlsdel2} by
$\d_t\overline{u^\delta}$, integrate over $M$, and take the real part
to get the second evolution law.
\smallbreak

The first law yields a global \emph{a priori} estimate for
$\|u^\delta(t)\|_{L^2(M)}$, uniformly with respect to $\delta\in
]0,1]$. We infer a uniform $H^1$ control from the second law: in the
same fashion as in the proof of Lemma~\ref{lem:nash},
Gagliardo--Nirenberg inequalities on $\R$ yield, in particular,
\begin{equation*}
  \|f\|_{L^\infty(M)}\le
  C\|f\|_{L^2(M)}^{1/2}\|f\|_{H^1(M)}^{1/2},\quad \forall f\in
  H^1(M). 
\end{equation*}
Denoting 
\begin{equation*}
  E^\delta(t)= \|\nabla u^\delta(t)\|_{L^2(M)}^2
+\frac{\l}{\si+1}\|u^\delta(t)\|_{L^{2\si+2}(M)}^{2\si+2},
\end{equation*}
we have of course $E^\delta(t)\le E^\delta(0)$, a quantity which does
not depend on $\delta$. In the defocusing case $\l\ge 0$, this yields
the required \emph{a priori} $H^1$ estimate. In the focusing case
$\l<0$, write as on $\R$,
\begin{align*}
  \|\nabla u^\delta(t)\|_{L^2(M)}^2 &=E^\delta(t)
  +\frac{|\l|}{\si+1}\|u^\delta(t)\|_{L^{2\si+2}(M)}^{2\si+2} \\
&
\le E^\delta(0) + C
\|u^\delta(t)\|_{L^{\infty}(M)}^{2\si}\|u^\delta(t)\|_{L^{2}(M)}^{2}
\\
&\le C\(\|u_0\|_{H^1(M)}\) +
  C\|u^\delta(t)\|_{L^{2}(M)}^{\si+2}\|u^\delta(t)\|_{H^1(M)}^{\si}. 
\end{align*}
The $L^2$ \emph{a priori} estimate and the assumption $\si<2$ yield a
uniform \emph{a priori} $H^1$ estimate. The analogue of
Theorem~\ref{theo:cauchy} follows as in Section~\ref{sec:exist} (see
\cite{GiVe85} for more details concerning the nonlinearity
$|u|^{2\si}u$).
\smallbreak

Uniqueness stems from the same arguments as in
Section~\ref{sec:unique}, and the fact that the nonlinearity
$|u|^{2\si}u$ is uniformly Lipschitzean on balls of $H^1(M)$, since
\begin{equation*}
  \left\lvert \lvert u\rvert^{2\si}u - \lvert
    v\rvert^{2\si}v\right\rvert \lesssim \(\lvert u\rvert^{2\si} + \lvert
    v\rvert^{2\si}\)\lvert u-v\rvert\lesssim  \(\lVert
    u\rVert_{L^\infty(M)}^{2\si} + \lVert 
    v\rVert_{L^\infty(M)}^{2\si}\)\lvert u-v\rvert,
\end{equation*}
and $H^1(M)\hookrightarrow   L^\infty(M)$.
\smallbreak
The end of the proof of Corollary~\ref{cor:stabnls} is exactly the
same as the proof of Theorem~\ref{theo:vanish} in the case $d=1$:
since $\l\in \R$, \eqref{eq:mass} remains valid.

\bibliographystyle{amsplain}
\bibliography{stab}

\end{document}